\tikzstyle arrowstyle=[scale=1]
\tikzstyle directed=[postaction={decorate,decoration={markings,
    mark=at position .65 with {\arrow[arrowstyle]{stealth}}}}]
\tikzstyle reverse directed=[postaction={decorate,decoration={markings,
    mark=at position .65 with {\arrowreversed[arrowstyle]{stealth};}}}]
\def\vn{\vec{n}}
\def\J{\mathcal{J}}
\def\w{\omega} 
\def\t{\theta}
\def\e{\varepsilon} 
\def\U{\mathcal{U}}
\def\N{\mathbb{N}}
\def\SS1{\mathbb{S}^1}
\def\Z{\mathbb{Z}} 
\def\R{\mathbb{R}} 
\def\s{\mathfrak{s}}
\def\B{\mathcal{B}}
\def\G{\mathcal{G}}
\newtheorem{theorem}{Theorem}
\newtheorem{lemma}[theorem]{Lemma}
\newtheorem{proposition}[theorem]{Proposition}
\newtheorem{corollary}[theorem]{Corollary}
\DeclareMathSymbol{\varnothing}{\mathord}{AMSb}{"3F} 
\renewenvironment{proof}{\noindent {\bf Proof.}}{ \hfill\qed\\ }
\newenvironment{proofof}[1]{\noindent {\bf Proof of #1.}}{ \hfill\qed\\ }
\begin{document}

\title{Unique ergodicity for infinite area translation surfaces}  

\author {Alba M\'alaga Sabogal}
\address{Inria, CRI de Paris}
\email{alba.malaga@polytechnique.edu}

\def\curraddrname{{\itshape Address}}
\author{Serge Troubetzkoy}
\address{Aix Marseille Univ, CNRS, Centrale Marseille, I2M,  Marseille, France}

  \curraddr{I2M, Luminy\\ Case 907\\ F-13288 Marseille CEDEX 9\\ France}

 \email{serge.troubetzkoy@univ-amu.fr}
 
 \thanks{
 The project leading to this publication has received funding from Excellence Initiative of Aix-Marseille University - A*MIDEX and Excellence Laboratory Archimedes LabEx (ANR-11-LABX-0033), French "Investissements d'Avenir" programmes. Theorem \ref{thm:periodic} was inspired by a very nice lecture of Omri Sarig.}
\begin{abstract}   
We consider infinite staircase translation surfaces with varying step sizes.  For typical step sizes 
 we show that the translation flow is uniquely ergodic in almost every direction.
Our result also hold for typical 
configurations of the Ehrenfest wind-tree model endowed with the Hausdorff topology.
In contrast,  we show that the translation flow on a  periodic translation surface 
can not be uniquely ergodic in any direction.
\end{abstract} 
\maketitle

\section{Introduction}
One of the most fundamental results in the theory of (compact) translation surfaces is the theorem of Kerchoff, Masur and Smillie which states that the geodesic flow on any compact translation surface is uniquely ergodic in almost every direction \cite{KeMaSm}. The importance of this result is such that there are several articles explaining the proofs \cite{Ar,GoLa,Mo} as well as \cite{MaTa}).
Since the result holds for all translation surfaces, it holds for translation surfaces arising from billiards in 
rational polygons.

In the past decade there has been intensive study of translation surface and polygonal billiards with 
infinite area. In particular,  research has concentrated on trying  to understand if the ergodicity conclusion of the 
Kerchoff, Masur, Smillie theorem holds.  There are examples of infinite billiard tables/translation surfaces 
which are not ergodic in almost every direction with respect to the natural invariant area measure
 \cite{FrUl,FrHu}, and others which are ergodic in almost every direction with respect to the natural 
invariant area measure \cite{HoHuWe,HuWe,MSTr2,RaTr} (see also \cite{FrUl1} for some partial results in 
this direction);  certain of these examples are shown in Figure \ref{fig-1}.
There is a special class of (compact) translation surfaces called Veech surfaces, on a (compact) Veech 
surface each minimal direction is in fact uniquely ergodic \cite{Ve1}\cite{Ve2}; and thus one might think that 
they are
natural candidates for unique ergodicity in infinite area as well. This is not the case, it turns out that for the 
only known Veech example of an infinite area translation surface, the translation flow 
has many ergodic invariant Radon measures in almost every direction \cite{HoHuWe}. Hooper has classified
the invariant measures of the translation flow in many direction for certain other examples \cite{Ho}.

We begin by showing that the translation flow on a periodic translation surfaces can never be uniquely 
ergodic (Theorem \ref{thm:periodic}).
Our main result is that there are infinite area translation surfaces for which the area measure is the unique (up to scaling) invariant ergodic Radon measure for the translation flow in almost every direction.
 We present our results in two different settings,  first for infinite staircases (Theorem \ref{main-stair}), and 
 then  in the framework of the Ehrenfest wind-tree model (Theorem \ref{main}).  In both cases we show that 
 the generic surface satisfies this unique 
ergodicity  result.
 We remark that unique ergodicity has been observed for non-compact translation surfaces of finite area 
 by Hooper \cite{Ho} as well as by Rafi and Randecker \cite{RaRa}, however in this case we have 
 classical finite measure unique ergodicity.

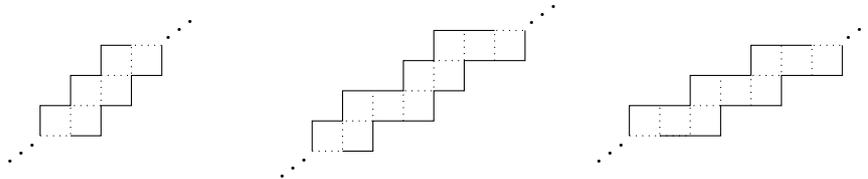
\begin{figure}[t]
\begin{minipage}[ht]{0.33\linewidth}
\centering
\begin{tikzpicture}[scale=0.4]
\draw[dotted] (0,0) -- (1,0) -- (1,1) -- (2,1) -- (2,2) -- (3,2) -- (3,3) -- (4,3);
\draw[] (1,0) -- (2,0) -- (2,1) -- (3,1) -- (3,2) -- (4,2) -- (4,3);
\draw[] (0,0) -- (0,1) -- (1,1) -- (1,2) -- (2,2) -- (2,3) -- (3,3);
\node at (-0.8,-0.3) {\fontsize{4}{6} \reflectbox {$\ddots$}};
 \node at (4.4,3.8) {\fontsize{4}{6} \reflectbox  {$\ddots$}};
\end{tikzpicture}
\end{minipage}\nolinebreak
\begin{minipage}[ht]{0.33\linewidth}
\centering
\begin{tikzpicture}[scale=0.4]
\draw[dotted] (0,0) -- (1,0) -- (1,1) -- (2,1) -- (2,2);
\draw[dotted] (3,1) -- (3,2) -- (4,2) -- (4,3) -- (5,3) -- (5,4);
\draw[dotted] (6,3) -- (6,4) -- (7,4);
\draw[] (1,0) -- (2,0) -- (2,1) -- (4,1) -- (4,2) -- (5,2) -- (5,3)-- (7,3) -- (7,4);
\draw[] (0,0) -- (0,1) -- (1,1) -- (1,2) -- (3,2) -- (3,3) -- (4,3) -- (4,4) -- (6,4);
\node at (-0.8,-0.3) {\fontsize{4}{6}  \reflectbox {$\ddots$}};
 \node at (7.4,4.8) {\fontsize{4}{6} \reflectbox  {$\ddots$}};
\end{tikzpicture}
\end{minipage}\nobreak
\begin{minipage}[ht]{0.33\linewidth}
\centering
\begin{tikzpicture}[scale=0.4]
\draw[dotted] (0,0) -- (1,0) -- (1,1) -- (2,1) -- (2,2);
\draw[dotted] (3,1) -- (3,2) -- (4,2) -- (4,3);
\draw[dotted] (-1,0) -- (0,0) -- (0,1);
\draw[dotted] (5,2) -- (5,3) -- (6,3);
\draw[] (0,0) -- (2,0) -- (2,1) -- (4,1) -- (4,2) -- (6,2) -- (6,3);
\draw[] (-1,0) -- (-1,1) -- (0,1) -- (1,1) -- (1,2) -- (3,2) -- (3,3) -- (5,3);
\node at (-1.8,-0.3) {\fontsize{4}{6} \reflectbox {$\ddots$}};
 \node at (6.4,3.8) {\fontsize{4}{6} \reflectbox  {$\ddots$}};
\end{tikzpicture}
\end{minipage}\nobreak
\caption{Periodic translation surfaces formed by identifying opposite sides. The first two are ergodic in almost every direction \cite{HoHuWe,RaTr}, while for the  third the  ergodic directions are of measure 0 \cite{FrUl}.}\label{fig-1}
\end{figure}

\section{Definitions and main results}
\subsection{Ergodic theory}
Let $T$ be a measurable map on a measurable space $(\Omega,\B)$, and suppose $\mu$ is a $\sigma$-finite measure on $(\Omega,\B)$ s.t. $\mu(\Omega) = \infty$.
We say that $\mu$ is \textit{invariant}, if $\mu(T^{-1}(E)) = \mu(E)$ for all $E \in \B$.
We say that $\mu$ is \textit{ergodic}, if for every set $E \in \B$ such that $T^{-1}(E) = E$ either 
$\mu(E) = 0$ or $\mu(\Omega \setminus E) = 0$.

Suppose $\Omega_0$ is a locally compact second countable metric space with
Borel $\sigma$-algebra $B_0$. Let $C_c(\Omega_0) := \{f : \Omega_0 \to \R: f$ continuous with compact support$\}$. 
A regular Borel measure $\mu$ on $\Omega_0$  is called a \textit{Radon measure}, if $\mu(C) < \infty$ for every compact set $C \subset \Omega_0$,
 
We will need to deal with Borel maps $T$ which are only defined on a subset $\Omega \subset \Omega_0$ with $\Omega \in \B_0$.
 
Let $\B := \{ E \cap \Omega : E \in \B_0\}$.  A measure $\mu$ on $(\Omega,\B)$  is
called \textit{locally finite}, if $\mu_0(E) := \mu(E \cap \Omega)$  is a Radon measure on $(\Omega_0,\B_0)$. 

We call the map $T$ \textit{uniquely ergodic} if
up to scaling,
$T$ admits a unique Radon  invariant measure.
  
\subsection{Periodic translation surfaces}
A translation surface $M$ is given by an at most countable index set $\Lambda$ and a 
disjoint collection of convex polygons 
$\{P_i \subset \R^2\}_{i \in \Lambda}$ with edges glued in pairs by translation.  The surface is 
compact if the set $\Lambda$ is finite.

Let $M$ be a compact translation surface;  $p: \tilde M  \to M $ is a  $\Z$-cover of $M$ of translation surfaces, i.e.,  $\tilde M$ a (noncompact) translation surface,
there is a finite set $P \subset M$ and a
covering map   $p: \tilde M \setminus p^{-1}(P) \to M \setminus P$  which is a triangulation in each
chart, and there is a translation automorphism $S : \tilde M \to \tilde M$ commuting with $p$, such that $M$
is isomorphic to $\tilde M / \langle S \rangle$. 

In a similar way,  $p: \tilde M  \to M $ is a  $\Z^2$-cover of $M$ if there is a finite set $P \subset M$ as above, and there are two translation automorphisms $S, S' : \tilde M \to \tilde M$, non aligned with each other and commuting with $p$, such that $M$
is isomorphic to $\tilde M / \langle S, S' \rangle$. 

We call a noncompact translation surface a \textit{periodic translation surface} if it is a $\Z^d$ cover of a compact translation surface ($d=1$
or $2$).

\begin{theorem}\label{thm:periodic}
For every direction $\theta$, the translation flow on a periodic translation surface in the direction $\theta$ is not uniquely ergodic.
\end{theorem}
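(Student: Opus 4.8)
The plan is to show directly that the area (Lebesgue) measure $m$ on $\tilde M$ is never the only invariant Radon measure, by producing a second one that is not proportional to it. First I would record two structural facts: the measure $m$ is invariant under the translation flow in every direction and is Radon (it is finite on each fundamental domain), and each deck transformation $S$ (and $S'$ when $d=2$) is a homeomorphism commuting with $\phi_t^\theta$ and preserving $m$. Consequently, if the flow were uniquely ergodic then, $m$ being an invariant Radon measure, the unique measure would be $m$ up to scaling; so it suffices to exhibit one invariant Radon measure that is not a constant multiple of $m$.

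To organize the construction I would pass to a skew-product picture. Choosing a fundamental domain identifies $\tilde M$ measurably with $M\times\Z^d$, under which $\phi_t^\theta$ becomes a skew product $(x,g)\mapsto(\bar\phi_t^\theta x,\, g+\tau(x,t))$ over the translation flow $\bar\phi_t^\theta$ on the compact surface $M$, where $\tau:M\times\R\to\Z^d$ is the additive displacement cocycle recording how many sheets a trajectory crosses, and $m$ corresponds to (area on $M$) $\times$ (counting measure on $\Z^d$). Invariant Radon measures for $\phi_t^\theta$ that project to counting measure on the fibre correspond exactly to $\bar\phi_t^\theta$-invariant measures on $M$, and lifts of finite measures are automatically Radon because a compact subset of $\tilde M$ meets only finitely many sheets.

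The argument then splits according to the base flow. If $\bar\phi_t^\theta$ on $M$ is not uniquely ergodic, I pick two non-proportional invariant probability measures $\nu_1,\nu_2$; their lifts $\nu_1\times(\mathrm{counting})$ and $\nu_2\times(\mathrm{counting})$ are non-proportional invariant Radon measures on $\tilde M$, and we are done. If the base flow is uniquely ergodic, the lift of area is the only measure of product type, so I would instead tilt the fibre weights: for a parameter $c\in\R^d$ I seek a conformal density $\nu_c$ on $M$ with $(\bar\phi_t^\theta)_*\big(e^{-\langle c,\tau(\cdot,t)\rangle}\nu_c\big)=\nu_c$, obtained as a positive fixed point of the normalized transfer operator by a Schauder-type argument. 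The associated measure $\mu_c$, carrying weight $e^{\langle c,g\rangle}$ on the sheet $g$, is then invariant and Radon, and is not proportional to $m$ as soon as $c\neq0$ since its sheet weights are non-constant. Existence of such a nonzero $c$ is governed by the convex pressure function $\lambda(c)$ with $\lambda(0)=0$: when the cocycle has nonzero drift, convexity together with properness of the cocycle forces a second zero of $\lambda$, and hence a valid $c\neq0$.

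The main obstacle is exactly the remaining directions that are uniquely ergodic on the base and have zero drift (these occur when the cohomology class defining the cover is transverse to the flow direction). There the pressure attains its strict minimum at $c=0$, the exponential tilt degenerates, and one must instead exploit the genuinely infinite-measure nature of the system: the centered cocycle is recurrent, $m$ is conservative, and the extra degrees of freedom furnished by the commuting $\Z^d$-action --- absent in the compact Kerckhoff--Masur--Smillie setting --- must be used to build a non-proportional invariant Radon measure by hand. I expect this conservative, zero-drift case to demand the most care and to be precisely where the periodicity of the surface, rather than mere non-compactness, is essential.
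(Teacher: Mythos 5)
Your proposal is not a complete proof: it fails exactly in the case you yourself flag at the end, namely when the base flow is uniquely ergodic and the displacement cocycle has zero drift. There you write that one must ``build a non-proportional invariant Radon measure by hand'' and that you expect this case to demand the most care --- that is an admission of a gap, not an argument. Moreover this case is the heart of the theorem, not a corner: for the recurrent covers of main interest (the infinite staircase, the periodic wind-tree) the drift vanishes in almost every direction, and the statement claims \emph{every} direction. The paper closes precisely this case with two ingredients absent from your proposal. First, existence of exactly conformal measures is not obtained from a transfer-operator/pressure argument but from Schmidt's theorem \cite{Sc}, applied after passing to the Cantor representation of the IET: over a uniquely ergodic base, an $(e^{\chi_a\circ\psi},\tau)$-conformal probability measure $\mu_a$ exists when $\chi_a\circ\psi$ has zero mean, so the zero-drift situation --- where you say the tilt ``degenerates'' --- is exactly the situation where such measures exist for \emph{all} $a\in\R$, yielding the family of Maharam measures $dm_a(x,\vn)=e^{-\chi_a(\vn)}\,d\mu_a(x)$, each invariant and locally finite. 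Second, and this is the key idea you are missing, one never needs to decide whether any $m_a$ is ergodic: if $m_a$ is ergodic for at least two values of $a$, these are non-proportional invariant Radon measures (restricting a hypothetical identity $m_a=c\,m_b$ to individual sheets forces $e^{(\chi_a-\chi_b)(\vn)}$ to be constant in $\vn$, which is false for $a\ne b$); if instead some $m_a$ is \emph{not} ergodic, its ergodic decomposition consists of locally finite invariant measures, and any two distinct components are already non-proportional ergodic invariant Radon measures. Either horn of the dichotomy contradicts unique ergodicity, with no construction ``by hand'' required.

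A secondary error: your nonzero-drift case is also unsound as stated. For a zero-entropy, uniquely ergodic base (an IET), the pressure function you invoke is affine, $\lambda(c)=\langle c,\mathrm{drift}\rangle$; it is not strictly convex, and for a $\Z$-cover with nonzero drift its only zero is $c=0$, so the ``second zero'' you postulate (a Cram\'er-type phenomenon requiring genuine stochasticity or positive entropy) does not exist. Consistently with this, no exactly $e^{\chi_a\circ\psi}$-conformal probability measure can exist when $\int\chi_a\circ\psi\,d\mu_0\neq 0$: unique ergodicity makes the Birkhoff sums $S_n(\chi_a\circ\psi)$ grow linearly and uniformly, and then $1=\int e^{S_n(\chi_a\circ\psi)}\,d\nu\to\infty$ for any candidate $\nu$, a contradiction. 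The transient case does have an elementary fix, but a different one from what you wrote: linear escape of the fibre coordinate makes every bi-infinite orbit of the skew product a proper discrete subset, so counting measure on a single orbit is an invariant Radon measure, evidently not proportional to the area measure.
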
  

\subsection{Staircases}
A \textit{staircase translation surface} or simply a {staircase} is a translation surface obtained by gluing an enumerated, ordered, collection of same size rectangles, say 2 by 1, all of whose sides are parallel to the coordinate axes in the following way. We place a rectangle in the plane with the center of the rectangle at the origin, label it as the 0th rectangle.  The bottom of first rectangle will
intersect the top of the 0th rectangle with length of intersection $w_0 \in (0,1)$.  Choosing a sequence $w := \{w_i \in (0,1) : i \in \Z\}$, we continue this procedure inductively to produce a staircase like collection of rectangles. Then
we identify opposite sides of the staircase to form a translation surface  which we will call $S_{w}$ (see Figure \ref{fig0}). Note that corners of the rectangles give rise to conical points. Every such corner, considered as a point on the staircase surface, is a conical point with angle $6\pi$.

The set of all staircases is then coded by the set $(0,1)^\Z$.  We consider the product topology on this space, its closure is $[0,1]^\Z$, a Baire space.  In the proof we will also consider parameters $w \in [0,1]^\Z$. In the case $w_n = 0$ for some $n$, after removing the singular points  $S_w$ is not connected.  These non-connected staircases will play a very important role in the proof.

Fix a direction $\theta$ and consider the translation flow $\psi_t^{\theta}=\psi_t^{\w,\theta}$ on the surface $S_w$.  It will be convenient to use the  section of this flow defined by intersecting 
$S_{\w}$ with the collections of lines $y=n$ where $n \in \Z$, we identify this section with the set
$X := \Z \times [0,2)$. Note that in $S_{\w}$ the points $0$ and $2$ are identified, thus each  set $\{n\} \times [0,2)  \subset X$ is a circle.  After the identification, the set $X$ does not depend on the parameter $\w$,  but sometimes we need to emphasize the nature of this sets as phase spaces for dynamical systems,  we will then 
write $X^{\w}$.
Let $X^N = X^{\w,N} \subset X^\w$ denote the set $X^N := \{-N + 1,\dots, N\} \times [0,2)$. 
Likewise, the sets do not depend on the direction $\theta$ chosen,  when needed for clarity we sometimes 
write $X^{\theta}$ or $X^{\w,\theta}$.

Let $T^{\theta}=T^{\w,\theta}$ be the first return map of the flow $\psi_t^{\theta}$ to the section $X$. Note that $T^{\theta}$ preserves the length measure $\mu$, and that this measure is an infinite measure.

\begin{proposition}\label{prop:cons}
For any $\w$ such that $\lim_{i \to \pm\infty}w_i=0$, $(T^{\w,\t},X,\mu)$ is a conservative system for all $\theta$.
\end{proposition}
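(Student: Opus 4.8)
The plan is to use the standard characterization that $(T,X,\mu)$, with $T := T^{\w,\t}$, is conservative if and only if it admits no wandering set of positive measure, and to rule out such sets by exhibiting an exhaustion of $X$ by finite-measure sets whose boundaries leak only a vanishing amount of mass per step. Concretely, I would work with the sets $X^N$, which satisfy $\mu(X^N) = 4N < \infty$ and $X^N \uparrow X$, and set $\eta_N := \mu(X^N \,\triangle\, T^{-1}X^N)$. The whole argument then splits into a geometric flux estimate for $\eta_N$ and a soft recurrence lemma. (For $\t$ horizontal the section is not transverse and each circle $\{n\}\times[0,2)$ is flow-invariant of finite measure, so conservativity is immediate; I may therefore assume $\t$ has nonzero vertical component.)

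First I would establish the flux estimate $\eta_N \le C_\t\,(w_N + w_{-N})$ for a constant $C_\t$ independent of $N$. This is the geometric heart of the matter: a trajectory can pass from the levels indexed by $X^N$ to a higher level only by threading the overlap of width $w_N$ between the $N$-th and $(N+1)$-st rectangles, and similarly at the bottom through the overlap of width $w_{-N}$. Tracking the first-return map across these two passages, the set of section points that leave $X^N$ in a single step is a union of boundedly many arcs whose total length is controlled by $w_N + w_{-N}$; the constant may depend on $\t$ (e.g.\ through $1/|\sin\t|$) but not on $N$, since all passages have the same shape. Since by hypothesis $w_i \to 0$ as $i \to \pm\infty$, this gives $\eta_N \to 0$.

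Next I would prove the abstract recurrence lemma: if $A_N \uparrow X$ with $\mu(A_N) < \infty$ and $\mu(A_N \,\triangle\, T^{-1}A_N) \to 0$, then $T$ is conservative. Suppose not; let $D$ be the dissipative part, $\mu(D) > 0$, and choose a wandering set $W \subset A_M \cap D$ with $\mu(W) > 0$. For $N \ge M$, dissipativity forces a.e.\ orbit through $W$ to visit $A_N$ only finitely often, hence to have a last exit; sending each $x \in W$ to the point $T^{k(x)}x$ of its last exit defines a map $\Psi_N \colon W \to A_N \setminus T^{-1}A_N$ landing in the set of permanent exit points. Because $W$ is wandering, two points of $W$ with the same image lie on a common orbit and hence coincide, so $\Psi_N$ is injective; partitioning $W$ by the value of $k(x)$ shows that $\Psi_N$ acts as $T^{k}$ on each piece, preserving measure and producing disjoint images. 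Therefore $\mu(W) \le \mu(A_N \setminus T^{-1}A_N) \le \eta_N$, and letting $N \to \infty$ gives $\mu(W) = 0$, a contradiction. Applying this with $A_N = X^N$ finishes the proof.

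The main obstacle is the flux estimate of the second paragraph: one must control precisely which section points cross the boundary of $X^N$ under a single application of the return map, accounting for the conical points of angle $6\pi$ and for trajectories that cross a side identification between two consecutive hits of the horizontal section. I expect this to reduce, after unfolding the return map near the top and bottom passages, to the elementary fact that an isometric flow can carry across a gate of width $w_{\pm N}$ only a set of section points of comparable measure. The recurrence lemma is then entirely soft, and the hypothesis $w_i \to 0$ enters solely to make $\eta_N$ vanish.
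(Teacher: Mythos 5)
Your proposal is correct, and its geometric half is exactly the paper's: the paper also exhausts $X$ by finite-measure unions of consecutive circles and observes that the only mass that can leave such a set in one step of $T$ must thread the gates of widths $w_{\pm N}$, so the escaping measure tends to $0$ (the paper packages this as an ``almost invariant box sequence''; it also takes the gates along subsequences $i_k,j_k$, so its argument needs only $\liminf$ of the widths to vanish along each tail, whereas your use of the full sets $X^N$ uses the stated hypothesis --- a cosmetic difference). Where you genuinely diverge is the soft half. The paper's Boxes lemma is topological: given a non-empty open set $U$, it picks a box $Y_n$ whose flux is smaller than $\mu(U\cap Y_n)$ and argues by cases --- either the images of the successive escape sets are pairwise disjoint, so a positive-measure subset of $U\cap Y_n$ never leaves $Y_n$ and Poincar\'e recurrence in finite measure applies, or two of those images meet, which already forces $T^{i-j}(U)\cap U\neq\emptyset$. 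Your lemma is instead measure-theoretic: Hopf decomposition, a positive-measure wandering set $W$, and the last-exit injection $\Psi_N\colon W\to A_N\setminus T^{-1}A_N$ giving $\mu(W)\le\eta_N\to 0$. Your route directly yields conservativity in the standard sense (no wandering set of positive measure), which is literally what the proposition asserts, while the paper's argument establishes non-wandering of non-empty open sets; the price is that you must justify the one step you state as automatic, namely that a.e.\ dissipative orbit meets the finite-measure set $A_N$ only finitely often. This is standard but should be written out: since the sets $T^kW$ are pairwise disjoint, the number of $k$ with $T^kx\in A_N$ is integrable over $W$, because $\sum_k\mu(T^kW\cap A_N)\le\mu(A_N)<\infty$. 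Two minor remarks: the flux constant can be taken independent of $\theta$, since projection along a fixed direction between two horizontal lines preserves length, so in fact $\eta_N\le 2(w_N+w_{-N})$; and your caveat about horizontal $\theta$ (where the section is not transverse) treats a degenerate case that the paper silently ignores.
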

This proposition is closely related to results in \cite{Tr,MS,MSTr1,MSTr2}. We still provide a proof for completeness (\S \ref{sec:cons}).
 We remark that the set of $\w$ satisfying this condition is a $G_\delta$ set.

\begin{figure}[t]

\begin{tikzpicture}[scale=1]

 \node at (-0.4,-0.3) {\fontsize{4}{6} \reflectbox{$\ddots$}};
 \node at (2.5,1.5) {\tiny \textbullet};
 \draw[] (0.4,0) -- (2,0) -- (2,1)-- (3.5,1)--(3.5,2) --(4.5,2) -- (4.5,3) -- (6,3) -- (6,4);
  \draw[] (0,0) -- (0,1) -- (1.5,1)-- (1.5,2)-- (2.5,2) -- (2.5,3) -- (4,3) -- (4,4) -- (5.8,4);

\draw[densely dotted] (0,0) -- (0.4,0); 
\draw[densely dotted] (1.5,1) -- (2,1);
\draw[densely dotted] (2.5,2) -- (3.5,2);
\draw[densely dotted] (4,3) -- (4.5,3) ;
\draw[densely dotted] (5.8,4) -- (6,4) ;

 \node at (0.2,-0.2) {\fontsize{4}{6} $w_{^-2}$};
 \node at (1.7,0.8) {\fontsize{4}{6} $w_{^-1}$};
 \node at (2.85,1.8) {\fontsize{4}{6} $w_0$};
 \node at (4.25,2.8) {\fontsize{4}{6} $w_1$};
  \node at (5.8,3.8) {\fontsize{4}{6} $w_2$};
 \node at (6.3,4.5) {\fontsize{4}{6} \reflectbox{$\ddots$}};
 
 \draw[red] (0,0.5) -- (2,0.5);
\draw[red] (1.5,1.5) -- (3.5,1.5);
\draw[red] (2.5,2.5) -- (4.5,2.5);
\draw[red] (4,3.5) -- (6,3.5);
 
  \node at (2.8,0.5) {\fontsize{4}{6}   $\{-1\} \times [0,2)$};
 \node at (4.3,1.5) {\fontsize{4}{6} $\{0\} \times [0,2)$};
  \node at (5.3,2.5) {\fontsize{4}{6} $\{1\} \times [0,2)$};
   \node at (6.8,3.5) {\fontsize{4}{6} $\{2\} \times [0,2)$};
   
\end{tikzpicture}
\caption{The staircase, opposite sides are identified. The section $X$ is marked in red.}\label{fig0}

\end{figure}
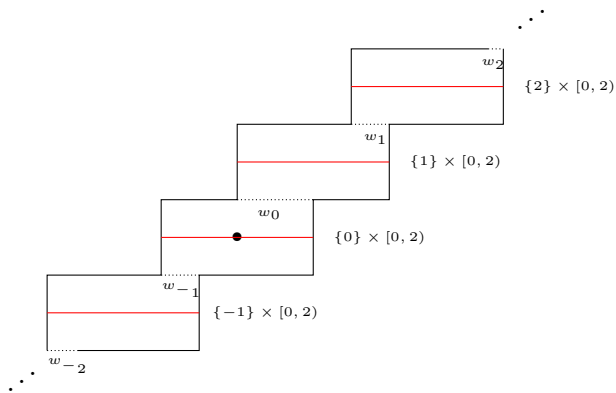

\begin{theorem}\label{main-stair} There is a dense $G_\delta$ subset $G$ of $[0,1]^{\Z}$ and a dense $G_\delta$ set of full measure of directions $\mathcal H$,  such that for each $w \in G$, we have $w \in (0,1)^{\Z}$ and, up to scaling,  $\mu$ is the unique $T^{\theta}$-invariant ergodic Radon measure, for every  $\theta \in \mathcal H$.
\end{theorem}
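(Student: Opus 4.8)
The plan is to prove Theorem \ref{main-stair} by a Baire category argument in the parameter space $[0,1]^\Z$, in which the disconnected staircases play the role of ``exactly solvable'' limits. The key observation is that one may approximate any staircase by disconnected ones whose compact blocks are \emph{square-tiled}: if we cut at two indices $\pm N$ (setting $w_{\pm N}=0$) and choose the intervening step sizes to be rational, the isolated block $M$ is, after rescaling, tiled by unit squares, hence a Veech surface. On such a surface the Veech dichotomy \cite{Ve1,Ve2} guarantees that every direction of irrational slope is uniquely ergodic. Accordingly I would define $\mathcal H$ to be the set of directions of irrational slope (more precisely, those avoiding the countable set of rational slopes and saddle-connection directions of all the rational blocks); its complement is countable, so $\mathcal H$ is a dense $G_\delta$ of full measure, and by construction every $\theta\in\mathcal H$ is uniquely ergodic on every square-tiled block.

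For the parameter side I would first record the right criterion. Since $w_i\to 0$ as $i\to\pm\infty$ makes $(T^{w,\theta},X,\mu)$ conservative (Proposition \ref{prop:cons}), the Hopf ratio ergodic theorem applies, so any ergodic invariant Radon measure $\nu$ is determined, up to a single multiplicative constant, by the limits of the ratios $S_n f/S_n g$ of Birkhoff sums over test functions $f,g\in C_c(X)$. This reduces unique ergodicity up to scaling to the statement that on every compact window $X^N$ the restriction of every invariant Radon measure agrees, after normalisation, with $\mu$. I would then define $G$ to be the set of $w\in(0,1)^\Z$ with $w_i\to 0$ as $i\to\pm\infty$ that are, in a strong quantitative sense, approximable at every scale by disconnected square-tiled-block staircases: for each $k$ there is a cut scale $N_k\uparrow\infty$ and a disconnected rational-block parameter within $1/k$ of $w$, with the channel widths $w_{\pm N_k}$ controlled. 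This is a countable intersection of open conditions, so $G$ is $G_\delta$; note that $G$ is defined without reference to $\theta$.

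Density of $G$ is proved by an explicit perturbation: given $w^0$ and $\e>0$, keep $w^0$ on a large central window, replace the nearby steps by rational values within $\e$, cut far out to create the near-disconnections, and let the remaining steps tend to $0$ so that Proposition \ref{prop:cons} applies; iterating over scales places the result in $G$. The conclusion is then reached pointwise in the direction: fix $w\in G$ and $\theta\in\mathcal H$, and let $\nu$ be an ergodic invariant Radon measure. Comparing the dynamics of $w$ with that of its disconnected approximant $w^{(k)}$, whose isolated block $M_k$ is uniquely ergodic in direction $\theta$, forces the normalised restriction of $\nu$ to the window $X^{N_k}$ to be close to $\mu$; letting $k\to\infty$, and using conservativity and ergodicity to fix one common normalising constant across the nested windows, yields $\nu=c\mu$.

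The main obstacle is the transfer from the genuinely isolated block $M_k$ to the connected surface $S_w$: when $w_{\pm N_k}$ is small but positive the block is joined to the rest through a thin channel, and a small amount of $\nu$-mass can leak across it, so the Veech unique ergodicity of $M_k$ does not apply verbatim. The crux is to make this quantitative---to bound the distortion of return times and of the invariant measure caused by a channel of width $w_{\pm N_k}$ by a quantity tending to $0$ with $k$---and to do so in a way robust enough that, although the rate of unique ergodicity on $M_k$ degenerates as $\theta$ approaches a rational slope, for each \emph{fixed} $\theta\in\mathcal H$ the scale $k$ can be taken large enough to win. This pointwise-in-$\theta$ balancing is exactly what allows a single parameter set $G$ to serve simultaneously for all $\theta$ in the fixed direction set $\mathcal H$.
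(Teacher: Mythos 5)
Your skeleton matches the paper's (Baire category in $[0,1]^{\Z}$, disconnected approximants with compact blocks, conservativity via Proposition \ref{prop:cons}, Hopf ratios), and your use of square-tiled blocks with the Veech dichotomy in place of Kerckhoff--Masur--Smillie is a genuinely nice variant, since it makes the candidate direction set explicit (irrational slopes). But the proposal has a real gap exactly at the step you yourself call ``the main obstacle,'' and the repair is not the quantitative leakage bound you suggest --- that tool cannot work. An arbitrary ergodic invariant Radon measure $\nu$ is a priori not comparable to $\mu$, so the set of points whose orbit escapes the block through the thin channel within the control time, although of small $\mu$-measure, may carry \emph{all} of the $\nu$-mass; the phrase ``a small amount of $\nu$-mass can leak'' has no justification, and restricting $\nu$ to a window $X^{N_k}$ destroys invariance, so block unique ergodicity says nothing about that restriction. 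The paper's resolution is structural rather than quantitative: it builds orbit-matching maps $\zeta^{\pm}$ between the perturbed staircase and the ringed one and proves the key geometric fact that the set of points whose \emph{forward} orbit segment of length $\ell_i$ is uncontrolled and the set whose \emph{backward} segment is uncontrolled are disjoint (this uses that the chosen directions avoid all saddle connections of length at most $2\ell_i$, requirement {\bf A3}). Hence \emph{every} point of the window is controlled in at least one time direction, and this feeds into the criterion of Lemma \ref{lemma6}: per-point forward-\emph{or}-backward convergence of Hopf ratios along a subsequence suffices, because the Hopf ratio ergodic theorem equates forward and backward limits $\nu$-a.e. Without this pointwise dichotomy, the comparison argument you sketch does not close.

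There is a second, quantifier-level problem. Your $\mathcal H$ is fixed once and for all (irrational slopes), but the closeness $\delta_k$ required of $w$ to its approximant $w^{(k)}$ must beat the time $\ell_k(\theta)$ by which Hopf ratios on the block $M_k$ have converged in direction $\theta$, and $\ell_k(\theta)$ blows up as $\theta$ approaches the (dense) rational slopes. Since $G$ must be defined before $\theta$ is chosen, you cannot encode ``$\delta_k$ small relative to $\ell_k(\theta)$'' into $G$; your closing claim that ``for each fixed $\theta$ the scale $k$ can be taken large enough to win'' is circular, because whether a scale wins depends on a comparison between a $\theta$-dependent time and a $\theta$-independent neighborhood size fixed in advance. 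The paper breaks this circularity by reversing the dependence: for each approximant $\w^i$ it first extracts a \emph{uniform} time $\ell_i$ valid on a direction set $D_i$ of measure at least $1-2\gamma_i$ (obtained by excluding neighborhoods of short saddle-connection directions, using the uniform convergence of Hopf ratios proved via the Cantor representation, Corollary \ref{corcor} --- itself a point needing proof since IETs are discontinuous), then chooses $\delta_i$, and only then sets $\mathcal H = \bigcap_{M}\bigcup_{i \ge M} D_i$; the direction set is adapted to the approximating data rather than declared in advance. Your square-tiled idea does not remove this constraint, because Veech's theorem gives no uniformity in $\theta$ either.
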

It follows that, up to scaling, the area measure is the unique $\psi_t^{\theta}$-invariant ergodic Radon measure, for every  $\theta \in \mathcal H$.

\subsection{Wind-trees}
In the wind-tree model a point particle moves without friction on the plane with at most infinitely many rhombi removed, and collides elastically with the rhombi.  This model was introduced by Paul and Tatiana Ehrenfest in 1912 in their
 a seminal article on the foundations of Statistical Mechanics in order to interpret the work of Boltzmann and Maxwell on gas dynamics \cite{EhEh}.

A square whose sides are parallel to lines $y=\pm x$ will be referred to as \textit{rhombus} or \textit{tree}. The $\mathcal L_1$ distance in $\mathbb R^2$ will be denoted by $d$. Note that balls with respect to this distance are rhombi.

Fix $s > 0$. A \textit{configuration} is an at most countable collection of rhombi with diameter $s$, whose interiors are pairwise disjoint. 
Since $s$ is fixed it is enough to note the centers of the rhombi, thus 
a configuration $g$ is  an at most countable subset of $\mathbb R^2$ such that if $z_1,z_2 \in g$ then $d(z_1,z_2) \ge s$.

To define a topology on the set of configurations consider polar coordinates $(r,\theta)$ on the plane. Each point $(r,\theta)$ in the plane is the stereographic projection of a point in the sphere with spherical coordinates $(2\arctan(1/r),\theta)$.   
Apply  the inverse of the stereographic projection to a configuration $g$ to obtain a subset of the sphere.  
Let $\hat{g}$ denote the union of this  set with the north pole of the sphere denoted by  $\{\infty\}$, it is a closed subset of the sphere.
The topology we define on the set of configurations is then induced by the Hausdorff distance $d_H$ on the sphere
given by  
$$d_H(g_1,g_2) = \max(\sup\limits_{z_1\in \hat{g}_1} \inf\limits_{z_2\in \hat{g}_2} \rho(z_1,z_2) , \sup\limits_{z_2\in \hat{g}_2} \inf\limits_{z_1\in \hat{g}_1} \rho(z_1,z_2)).$$
Here $\rho$ denotes the geodesic distance on the sphere, i.e., the length of the shortest path from one point to another along the great circle passing through them.  

Let $\textit{Conf}$ be the set of all configurations.
In  \cite{MSTr4}, we show that $(\textit{Conf},d_H)$ is a compact metric space, thus a Baire space.

Fix $g \in \textit{Conf}$.  The \textit{wind-tree table} $\B^g$ is the plane $\mathbb{R}^2$ with the interiors of the union of the trees
removed.
Fix  $\theta \in \SS1$.
The \textit{billiard flow $\phi_t^{g,\theta}$ in the direction $\theta$}   is  the free motion in the interior of $\B^g$ with elastic collision from the boundary of $\B^g$ (the boundary of the union of the trees).
If a billiard orbit hits a corner of a tree, the outcome of the collision is not defined, and we agree that the billiard orbit stops there, its future is not defined anymore.
Let $T^{g,\theta}$ be the first return map of the flow  $\phi_t^{g,\theta}$ to the boundary. 
We will sometime suppress the
$g$ dependance of certain notations if it can be deduced from the formulae, for example we might write $T^{\theta}$ for $T^{g,\theta}$.

Throughout the wind-tree argument we will always suppose that $\theta$ is not parallel to the rhombi sides. Once launched in the direction $\theta$, the billiard direction can achieve four directions $[\theta] := \{\theta, \frac32\pi - \theta, \pi + \theta, \frac12\pi-\theta \}$;
thus the \textit{phase space} $X^{g,\theta}$  
of the billiard map in the direction $\theta$ is the set of pairs $(\s,\phi)$ such that at $\s \in \partial \B^g$ the
direction $\phi$ points to the interior of the table, i.e. away from the trees.
Note that in this notation  $T^\theta$,  $T^{\frac32\pi - \theta}$, $T^{\pi + \theta}$ and $T^{\frac12\pi-\theta}$
are all the same.

For each direction $\theta$, the billiard map  $T^{\theta}$ preserves 
 the length measure $\mu$ on $\partial \B^g$ times the discrete measure on $[\theta]$, we
will also call this measure $\mu$. 
Note that $\mu$ is an infinite measure.

\begin{theorem}\label{main} For any $s>0$ there is a dense $G_\delta$ subset $G$ of $\textit{Conf}$ and a dense $G_\delta$ set of full measure of directions $\mathcal H$,  such that for each $g \in G$, up to scaling, the length measure $\mu$ is the unique $T^{\theta}$-invariant ergodic measure, for every  $\theta \in \mathcal H$.
\end{theorem}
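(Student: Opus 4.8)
The plan is to run the Baire-category argument of Theorem~\ref{main-stair} in the compact (hence Baire) space $(\textit{Conf},d_H)$ of \cite{MSTr4}. The first step is to reduce global unique ergodicity to finite-window statements. One first needs conservativity of $T^{g,\theta}$ (the wind-tree analog of Proposition~\ref{prop:cons}, proved by the same recurrence argument), so that first-return maps to relatively compact pieces of the section are defined $\mu$-a.e. Let $X^N\subset X^{g,\theta}$ be the part of the section lying over the boundary inside the disk of radius $N$, and let $T_N^{g,\theta}$ be the first return of $T^{g,\theta}$ to $X^N$. The reduction is: if $T_N^{g,\theta}$ is uniquely ergodic with invariant measure $\mu|_{X^N}$ for every $N$, then $T^{g,\theta}$ is uniquely ergodic with invariant measure $\mu$. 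Indeed, any $T^{g,\theta}$-invariant Radon measure $\nu$ is finite on the precompact $X^N$, so $\nu|_{X^N}$ is a finite $T_N^{g,\theta}$-invariant measure and hence $\nu|_{X^N}=c_N\,\mu|_{X^N}$; restricting the identity for $X^M$ to $X^N\subset X^M$ forces $c_N=c_M$, so $\nu=c\,\mu$ on $\bigcup_N X^N=X^{g,\theta}$.

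It then remains to produce, for $\theta$ in a full-measure set $\mathcal H$, a dense $G_\delta$ of configurations for which every $T_N^{g,\theta}$ is uniquely ergodic. I would write the set of such $g$ as $\bigcap_{N,k,f}O_{N,k,f}$, where $f$ runs over a countable uniformly dense family in $C(X^N)$, $k\in\N$, and
\[
O_{N,k,f}=\Big\{g:\ \exists\,n,\ \sup_{x\in X^N}\Big|\tfrac1n\sum_{i<n}f\big((T_N^{g,\theta})^i x\big)-\tfrac{1}{\mu(X^N)}\int_{X^N}\!f\,d\mu\Big|<\tfrac1k\Big\},
\]
membership in all $O_{N,k,f}$ being the standard quantitative criterion for unique ergodicity of $T_N^{g,\theta}$ with invariant measure $\mu|_{X^N}$. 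Each $O_{N,k,f}$ is open, because the $n$-step billiard map depends continuously, in the Hausdorff topology, on the finitely many trees met by the relevant orbit segments, away from the measure-zero set of orbits that hit a corner, so the finite Birkhoff average is upper semicontinuous in $g$.

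For density I would use barrier configurations, the wind-tree counterpart of the disconnected staircases $w_n=0$. Finite configurations are $d_H$-dense (truncating the centers beyond radius $R$ moves the deleted ones within spherical distance $\approx 2/R$ of the north pole, which lies in every $\hat g$), so fix a countable dense family of finite configurations $\{c_j\}$. For integers $R$ larger than $N$ and than the radius containing the trees of $c_j$, adjoin to $c_j$ a closed loop of rhombi that pairwise share full diagonal edges: since the trees are balls of diameter $s$ for the $\mathcal L_1$ distance $d$, two of them can share such an edge with centers exactly $s$ apart, so these chains are genuinely impenetrable walls and can be assembled into a simple closed barrier at distance $R$ enclosing $X^N$ and all trees of $c_j$. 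Adjoining this far barrier is a $d_H$-perturbation of size $\le 2/R$, so the configurations $c_j+\text{barrier}_R$ are again dense. For such a configuration the $T_N^{\theta}$-orbit stays inside a bounded rational polygon (all edges of slope $\pm1$) whose unfolding is a compact translation surface $M_{j,R}$; set $\mathcal H$ to be the intersection, over the countable family of these surfaces, of their uniquely ergodic directions in the sense of Kerckhoff--Masur--Smillie \cite{KeMaSm}, with the directions parallel to the rhombi sides removed. This $\mathcal H$ is a dense $G_\delta$ of full measure, and for every $\theta\in\mathcal H$ the infinite-time window average on $c_j+\text{barrier}_R$ equals the spatial average, so $c_j+\text{barrier}_R\in O_{N,k,f}$; since these configurations are dense, so is $O_{N,k,f}$. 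Intersecting over $N,k,f$ yields the dense $G_\delta$ set $G$.

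The main obstacle I expect lives entirely in the density step, in two places. Geometrically, one must check that the edge-sharing chains of $\mathcal L_1$-balls really do seal off a bounded cell for the flow (no nonsingular orbit escapes) while respecting the disjointness and $\ge s$ spacing constraints, and that the confined billiard unfolds to a genuine compact translation surface so that \cite{KeMaSm} applies. Dynamically, the delicate point is to obtain a \emph{single} set $G$ serving all $\theta\in\mathcal H$ simultaneously: the openness of $O_{N,k,f}$ was established for fixed $\theta$, so one needs the finite-time continuity of the billiard to be locally uniform in $(g,\theta)$ jointly, together with a quantitative unique ergodicity on the compact pieces $M_{j,R}$ whose rate is uniform over $\theta\in\mathcal H$, so that the neighborhood placed inside $O_{N,k,f}$ around each barrier configuration does not degenerate as $\theta$ varies; this is handled exactly as in Theorem~\ref{main-stair}. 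The recurring technical nuisance throughout is the treatment of orbits that run into tree corners, where $T^{g,\theta}$ is undefined.
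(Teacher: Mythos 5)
Your overall frame --- Baire category in $(\textit{Conf},d_H)$, density of barrier/ringed configurations obtained by adjoining a far-away diamond of edge-sharing rhombi (cheap in $d_H$ because it happens near the north pole), and unfolding the enclosed rational polygon to a compact translation surface so that Kerckhoff--Masur--Smillie applies --- matches the paper. The genuine gap is your openness claim for $O_{N,k,f}$, and it is not a technicality: it is precisely the difficulty the paper's proof is built to overcome. Your criterion quantifies unique ergodicity of the \emph{first-return} map $T_N^{g,\theta}$, but an $n$-fold return to $X^N$ is not a finite-time object of the billiard: return times are unbounded, and the relevant orbit segments can travel arbitrarily far before coming back, hence depend on trees arbitrarily far from the origin, which the Hausdorff metric does not control (a $d_H$-perturbation of size $\varepsilon$ can move, add or delete every tree beyond radius of order $1/\varepsilon$). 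Worse, your dense configurations are never interior points of $O_{N,k,f}$: the barrier trees touch at distance exactly $s$, the minimum allowed, so $d_H$-small perturbations open gaps in the wall (this is exactly the paper's Figure of an $8$-ringed configuration and a nearby one, and Proposition \ref{prop1}(2) shows the generic configuration has \emph{all} pairwise distances $>s$, i.e.\ every wall has gaps). Orbits leak out through the gaps, and for a leaked point the return map to $X^N$ is either undefined or unrelated to the barrier dynamics; since your membership condition takes a supremum over \emph{all} $x\in X^N$, these points destroy it. So the sets $O_{N,k,f}$ are not open, barrier configurations do not lie in their interior, and the Baire argument collapses.

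What is missing is a mechanism for the points whose forward orbit escapes the perturbed ring. The paper never induces on compact pieces; it controls orbit segments of a \emph{fixed finite length} $\ell_i$ of the full map $T^{g,\theta}$ via Hopf (ratio) averages, accepts that points whose forward segment exits the ring (the complement of the domain of $\zeta^+$) are uncontrolled, and proves that every point has either its forward or its backward length-$\ell_i$ segment controlled (the blocking-point separation condition \textbf{A3}: uncontrolled-forward and uncontrolled-backward sets are $\iota_i$-separated, hence disjoint). The ergodic-theoretic input that makes this either/or control sufficient is Lemma \ref{lemma6}: forward-or-backward convergence of Hopf averages along a single subsequence already pins down every conservative ergodic Radon invariant measure, and Proposition \ref{prop:cons} handles conservativity. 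Your proposal contains no analogue of either ingredient, and your closing paragraph locates the risk in the wrong places (corner-hitting orbits, uniformity in $\theta$). Two secondary gaps of the same flavor: your reduction step assumes $\nu|_{X^N}$ is $T_N^{g,\theta}$-invariant, which requires $\nu$ itself (not just $\mu$) to be conservative, so dissipative ergodic Radon measures supported on single escaping orbits must still be excluded, as in the paper's corollary to Lemma \ref{lemma6}; and your $\mathcal H$, an intersection of Kerckhoff--Masur--Smillie full-measure sets, is of full measure but not visibly a dense $G_\delta$ --- the paper gets the $G_\delta$ structure by replacing ``uniquely ergodic direction'' with open finite-time conditions (the sets $B_i\subset C_i$, $D_i$), which is again a finite-time formulation that a return-map criterion cannot accommodate.
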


Note that for each $g \in G$, up to scaling, the area measure is thus the unique $\varphi_t^{g,\theta}$-invariant ergodic measure, for every  $\theta \in \mathcal H$.

\subsection{Generalizations}	
Our results hold in a much more general setting.  In particular they hold in any Baire space $X$ of infinite area translation surfaces such
that
\begin{enumerate}
\item  the set  \{$S \in X: $  there is a compact
translation surface $S^{comp} \subset S\}$ contains a dense countable set $\mathcal{S}$,
\item suppose $(S_i) \subset \mathcal{S}$ converges sufficiently quickly to a surface $S \not \in \mathcal{S}$, then $S^{comp}_i$ converges to $S$,
and $S$ is connected.
\end{enumerate}
The proof of our results in this more general setting are essentially identical to the proofs of Theorems \ref{main-stair} and \ref{main},
we choose not to present an abstract proof for readability issues.

We list some examples of other situations where our theorems hold.

1) Billiards.
Theorem \ref{main-stair} also holds if we change the staircase dynamics to billiard dynamics, i.e., we do not identify the opposite sides of the staircase,  and the flow 
collides elastically when reaching the boundary.  

More generally fix an arbitrary rational polygon $P$.  We can construct a staircase like billiard table from $P$, which we call an
{\it infinite polygon}, by choosing
two sides of $P$ and a sequence of lengths $\w_n \in [0,a]$ where $a > 0$ is the shorter of the two lengths of the chosen sides. 
We then construct the infinite polygon in an analogous way to the construction of staircase.
We can also start with a countable collection of rational polygons $P_n$, each with a pair of marked sides $(s_1^n,s_2^n)$. 
Let $a_n$ be the minimum of the lengths of of $s_2^n,s_1^{n+1}$.  Then glue them as above with intersection $\w_n \in [0,a_i]$. 

2) Generalized staircases (see Figure \ref{fig:gs}).
Take a countable enumerated collection of compact translation surfaces $S_n$.  Consider a sequence $\w_n \in [0,1]$. On $S_n$ we draw two parallel, 
non-intersecting segments of length $\w_n$ and $\w_{n+1}$.  Then we
construct an infinite generalized staircase by gluing the slits of length $\w_n$ in the following sense, i.e., when the flow arrives at a slit
in $S_n$, then it jumps to the glued slit in $S_{n+1}$ and continues it the same direction. To make the notion of same direction precise 
we consider a polygonal representation of  each $S_n \subset \R^2$ with the slits horizontal, then if we hit a slit from the bottom on $S_n$
we exit from the top of the slit on $S_{n+1}$.

\begin{figure}[h]

\begin{tikzpicture}[scale=1]

 \node at (-0.4,-0.3) {\fontsize{4}{6} \reflectbox{$\ddots$}};
 \draw[] (0.4,0) -- (2,0) -- (2,1)-- (3.5,1)--(3.5,2) --(4.5,2) -- (4.5,3) -- (6,3) -- (6,4);
  \draw[] (0,0) -- (0,1) -- (1.1,1) -- (1.1,1.2) --(1.2,1.2) -- (1.2,1) -- (1.5,1)-- (1.5,2) --(1.6,2) --(1.6,2.3) -- (1.7,2.3)  -- (1.7,2.4) -- (1.9,2.4) -- (1.9,2) -- (2.5,2) -- (2.5,3) --
(2.65,3) -- (2.65,3.3) -- (3.7,3.3) -- (3.7,3)-- (4,3) -- (4,4) -- (4.1,4) -- (4.1,4.2) -- (4.3,4.2) -- (4.3,4.15)  -- (4.4,4.15) -- (4.4,4)-- (5.8,4);

\draw[densely dotted] (0,0) -- (0.4,0); 
\draw[densely dotted] (1.5,1) -- (2,1);
\draw[densely dotted] (2.5,2) -- (3.5,2);
\draw[densely dotted] (4,3) -- (4.5,3) ;
\draw[densely dotted] (5.8,4) -- (6,4) ;

 \node at (0.2,-0.2) {\fontsize{4}{6} $w_{^-2}$};
 \node at (1.7,0.8) {\fontsize{4}{6} $w_{^-1}$};
 \node at (2.85,1.8) {\fontsize{4}{6} $w_0$};
 \node at (4.25,2.8) {\fontsize{4}{6} $w_1$};
  \node at (5.8,3.8) {\fontsize{4}{6} $w_2$};
 \node at (6.3,4.5) {\fontsize{4}{6} \reflectbox{$\ddots$}};
   
\end{tikzpicture}
\caption{A generalized staircase (obvious identifications hold)}\label{fig:gs}

\end{figure}
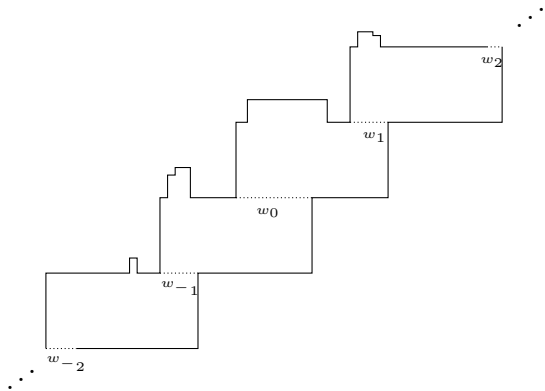

3. Wind-trees. In two previous articles we have considered a subset of configurations which are small perturbations of lattice configurations, and we showed  that the generic wind-tree is minimal and ergodic in  almost every direction \cite{MSTr1,MSTr2}.  The proofs of these two results
hold mutatis  mutandis  in the more general settings (staircases and wind-trees) which we consider here.  The results from the present paper hold in this setting as well.

\subsection{Comparison to other definitions of unique ergodicity}
In his survey article \cite{Sa} Sarig defines two notions of unique ergodicity in the infinite measure setting which are stronger than the one we prove.

A point $x \in \Omega$ is called {\it generic} for $\mu$  if
 for all $f,g \in C_c(\Omega)$ such that $g \ge 0$ and $g \ge 0$ and $\int g \, d\mu > 0$ we have
  $$ \frac{ \sum_{k=0}^{\ell} f \big (T^k(x)  \big)}{\sum_{k=0}^{\ell} g \big (T^k( x)  \big)} \to  \frac{\int_{\Omega} f(y) \, d\mu(y)}{\int_{\Omega} g(y) \, d\mu(y)}.\\$$
  
Sarig calls a map $T$ \textit{uniquely ergodic} if
 (1) up to scaling,
$T$ admits a unique Radon  invariant measure not supported on a
single orbit; and (2) every point is generic for
this measure.

This strong version of unique ergodicity does not hold in either of our settings. In fact, in \cite{MSTr1} we showed that
 the existence of non-recurrent points, this result holds in all the settings we consider here, thus (2) can not hold.

Sarig calls a map $T$ \textit{uniquely ergodic in the broad sense } if (1) up to scaling,
$T$ admits a unique Radon ergodic invariant measure not supported on a
single orbit; and (2) every non-exceptional non-periodic point is generic for
this measure (see \cite{Sa}).

Here  a  point
$x$ is called (forward) \textit{exceptional} for a map $T$  if the measure $\sum_{n > 0} \delta_{T^n(x)}$
is locally finite, where $\delta_y$ denote the point mass at $y$.

We do not know if  the proofs of Theorems \ref{main-stair} and \ref{main} can be strengthened to show unique ergodicity in the broad sense.

\section{Non-unique ergodicity}
Let $\tau : I \to I$ be a uniquely ergodic (finite) interval exchange transformation. Let $d=1$ or $2$ and
let $\psi : I \to \Z^d$ be a function which is locally constant, with a finite set of discontinuities.  Let $X = I \times \Z^d$ and consider the
skew product $T: X \to X$ defined by $T(x,\vn) = (\tau(x), \vn + \psi(x))$.

The map $T$ is an infinite interval exchange transformation.  
Note that for any $\Z$ or $\Z^2$ periodic translation surface, the first
return map will be of this form for almost every direction.  This is in particular true for 
the periodic staircases and periodic wind tree models.

Let $h : I \to \R_+$ be a Borel measurable map.  A Borel probability measure $\mu$ on $I$  is called an 
\textit{$(h,\tau)$-conformal measure} if $\mu \circ \tau$ is absolutely continuous with respect to $\mu$ 
and the Radon-Nikodym derivative satisfies $\frac{d\mu \circ \tau}{d\mu}(x) = h(x)$ for $\mu$-a.e.\ $x \in I$.

Let $\chi : \Z^d \to \R$ 
be a group homomorphism, and $\mu_{\chi}$ a $(e^{\chi \circ \psi},\tau)$-conformal
measure.  For each $\vn \in \Z^d$, consider the map $\pi_{\vn} : I \to I \times \Z^d$ given by
$\pi_{\vn}(x) = (x,\vn)$. A measure $m_\chi$ on $I \times \Z^d$ is called the
$\chi$\textit{-Maharam measure} associated to $\tau$, $\mu_{\chi}$ and $\chi$ if  it satisfies the equation
 $m_{\chi} \circ \pi_{\vn} = \frac{1}{e^{\chi(\vn)}}
\mu_{\chi}.$

\begin{lemma}
In this context Maharam measures always exist, are locally finite, and are invariant under the skew product transformation $T$.
\end{lemma}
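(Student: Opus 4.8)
The plan is to construct $m_\chi$ by hand from $\mu_\chi$ and $\chi$, and then verify the three assertions in turn: existence and local finiteness will fall out of the construction, while the whole content of the lemma sits in the invariance.

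Since $\psi$ is locally constant with finitely many discontinuities it takes only finitely many values, so writing $I_{\vec k} := \psi^{-1}(\vec k)$ for $\vec k \in \Z^d$ gives a \emph{finite} partition $\{I_{\vec k}\}$ of $I$ (only finitely many $I_{\vec k}$ are non-empty). The defining relation $m_\chi \circ \pi_{\vn} = e^{-\chi(\vn)}\mu_\chi$ prescribes $m_\chi$ on each slice $I \times \{\vn\}$, hence on every Borel rectangle $E \times \{\vn\}$; as these generate the Borel $\sigma$-algebra of $X = I \times \Z^d$, any Maharam measure is unique, and existence follows by simply setting
$$ m_\chi(A) := \sum_{\vn \in \Z^d} e^{-\chi(\vn)}\,\mu_\chi\bigl(\{x \in I : (x,\vn) \in A\}\bigr), $$
a countable sum of scaled copies of $\mu_\chi$, which is a Borel measure satisfying the required identity. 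Local finiteness is then immediate: since $\Z^d$ is discrete, every compact $C \subseteq X$ meets only finitely many slices, indexed by some finite $F \subseteq \Z^d$, so $m_\chi(C) \le \sum_{\vn \in F} e^{-\chi(\vn)}\mu_\chi(I) = \sum_{\vn \in F} e^{-\chi(\vn)} < \infty$ because $\mu_\chi$ is a probability measure; in general $m_\chi$ is infinite, its total mass being $\sum_{\vn} e^{-\chi(\vn)}$.

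For invariance it suffices to check $m_\chi\bigl(T^{-1}(E \times \{\vn\})\bigr) = m_\chi(E \times \{\vn\})$ for Borel $E \subseteq I$. Unwinding $T(x,\vec m) = (\tau(x),\vec m + \psi(x))$ shows that $(x,\vec m) \in T^{-1}(E \times \{\vn\})$ exactly when $\tau(x) \in E$ and $\psi(x) = \vn - \vec m$, so
$$ T^{-1}(E \times \{\vn\}) = \bigsqcup_{\vec m \in \Z^d} \bigl( (\tau^{-1}(E) \cap I_{\vn - \vec m}) \times \{\vec m\} \bigr), $$
and therefore
$$ m_\chi\bigl(T^{-1}(E \times \{\vn\})\bigr) = \sum_{\vec m \in \Z^d} e^{-\chi(\vec m)}\,\mu_\chi\bigl(\tau^{-1}(E) \cap I_{\vn - \vec m}\bigr). $$
Here I would invoke the conformality of $\mu_\chi$: on the set $F_{\vec m} := \tau^{-1}(E) \cap I_{\vn - \vec m}$ the function $\psi$ is constantly $\vn - \vec m$, so $\frac{d(\mu_\chi \circ \tau)}{d\mu_\chi} = e^{\chi \circ \psi}$ yields $\mu_\chi(F_{\vec m}) = e^{-\chi(\vn - \vec m)}\,\mu_\chi\bigl(E \cap \tau(I_{\vn - \vec m})\bigr)$, using $\tau F_{\vec m} = E \cap \tau(I_{\vn - \vec m})$.

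The decisive point is the cancellation: because $\chi$ is a homomorphism, $\chi(\vn - \vec m) = \chi(\vn) - \chi(\vec m)$, so the Maharam weight $e^{-\chi(\vec m)}$ and the conformal factor $e^{-\chi(\vn - \vec m)}$ combine to the single, $\vec m$-independent constant $e^{-\chi(\vn)}$. Pulling it out leaves
$$ m_\chi\bigl(T^{-1}(E \times \{\vn\})\bigr) = e^{-\chi(\vn)} \sum_{\vec m \in \Z^d} \mu_\chi\bigl(E \cap \tau(I_{\vn - \vec m})\bigr) = e^{-\chi(\vn)}\mu_\chi(E), $$
where, as $\vec m$ ranges over $\Z^d$ so does $\vn - \vec m$, and since $\tau$ is a bijection of $I$ the sets $\{\tau(I_{\vec k})\}_{\vec k}$ again form a finite partition of $I$, so the (finite) sum equals $\mu_\chi(E)$. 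This equals $m_\chi(E \times \{\vn\})$, proving invariance. I expect this exact cancellation to be the crux — it is the one place where both the conformal identity and the homomorphism property of $\chi$ enter in an essential, interlocking way; the only routine care is to set up $\tau$ and the $I_{\vec k}$ on half-open intervals so that $\tau$ is a genuine bijection and no boundary point is double-counted.
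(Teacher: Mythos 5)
Your proofs of local finiteness and of $T$-invariance are correct, and the invariance computation is a properly fleshed-out version of what the paper compresses into one sentence (``the scaling factor of $\chi$ from conformality cancels with the $\frac{1}{e^{\chi}}$ factor arising from the change in the $\Z^d$-coordinate''): the partition into the level sets $I_{\vec k}$ of $\psi$, the identity $\mu_\chi(F_{\vec m}) = e^{-\chi(\vn-\vec m)}\,\mu_\chi\bigl(E\cap\tau(I_{\vn-\vec m})\bigr)$, and the homomorphism cancellation are exactly the content of that remark.

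There is, however, a genuine gap in the existence claim. You treat the conformal measure $\mu_\chi$ as given data and only build $m_\chi$ from it; but the nontrivial content of ``Maharam measures always exist'' is precisely the existence of an $(e^{\chi\circ\psi},\tau)$-conformal measure, which your argument never addresses. For a non-constant potential, a conformal measure is an eigenmeasure of a dual transfer operator; its existence is not automatic, and the standard fixed-point arguments require a \emph{continuous} map on a compact space, which an interval exchange transformation is not. The paper resolves this by passing to the Cantor representation of $\tau$ (refined so that $\psi$ becomes continuous), applying Theorem 4.1 of \cite{Sc} there to obtain a conformal measure, and pushing it back down to $I$. This step cannot be omitted: in the proof of Theorem \ref{thm:periodic}, the lemma is invoked to produce, for \emph{every} $a\in\R$, a conformal measure $\mu_a$ and hence a Maharam measure $m_a$; if the existence of $\mu_\chi$ is merely a standing hypothesis, the family of mutually non-proportional locally finite invariant measures needed there is never actually constructed, and the theorem's proof collapses. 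So your construction settles ``conformal $\Rightarrow$ Maharam'' (the easy half, which the paper also does via $dm_{\chi}(x,\vn) = e^{-\chi(\vn)}d\mu_{\chi}(x)$), but the essential existence input is missing.
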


\begin{proof}
Passing to the Cantor representation (refined so that $\psi$ is continuous), and applying
Theorem 4.1 in \cite{Sc} yields that $(e^{\chi\circ \psi},\tau)$-conformal measure exists in the Cantor 
representation.  The push down $\mu_{\chi}$ of this measure to $X$  is also a conformal measure.
The Maharam measure $m_\chi$  associated to $\tau$, $\mu_{\chi}$ and $\chi$ is given by the following,
$$dm_{\chi}(x,\vn)  := e^{-\chi(\vn)} d\mu_{\chi}(x). $$
Maharam measures are locally finite since 
$$m_\chi(I \times \{\vn\}) =e^{-\chi{(\vn)}}\times \mu_{\chi}(I)  = e^{-\chi{(\vn)}}< \infty$$ for any $\vn \in Z^d$.

The measure is $T$-invariant since the scaling factor of $\chi$ from $(e^{\chi \circ \psi},\tau)$-conformality
 cancels with the $\frac{1}{e^{\chi}}$ factor arising from the change in the $\Z^d$-coordinate.
\end{proof}

\begin{proofof}{Theorem  \ref{thm:periodic}}
Suppose that the direction is such that $\tau$ is not uniquely ergodic.
The product of each $\tau$-ergodic measure with the counting measure on $\Z^d$
is a $T$-invariant locally finite measure, these measures  are not rescalings of one another.

Suppose now that  $\tau$ is uniquely ergodic. For each $a  \in \R$ consider the group homomorphism  $\chi_a(n) = na$ in the case $d=1$ and
$\chi_a(\vn) = a(n_1+ n_2)$ in the case $d=2$ with $\vn= (n_1,n_2)$.
Consider the associated
$(e^{\chi_a \circ \psi},\tau)$-conformal measure ${\mu_a}$, it satisfies $\frac{d\mu_a \circ \tau}{d\mu_a}(x) = e^{\chi_a \circ \psi (x)}$ 
for $\mu_a$-a.e.\ $x \in I$
and
the associated Maharam measure $m_a$ satisfies
$$dm_{a}(x,\vn)  := e^{-\chi_a(\vn)} d\mu_a(x). $$
The measure $m_a$ is a locally finite measure.
Suppose that $m_a$ is ergodic for at least two values of $a$, 
then it suffices to notice that if $a \ne b$ then $\chi_a$ and $\chi_b$ are  not rescalings of one another. 
On the other hand if $m_{a}$ is not ergodic for some (all but one)  $a \in \R$ then, since the ergodic decomposition of a locally finite measure is into locally finite measures we are also done.
\end{proofof}

The general program to study Maharam measures  was proposed in  a much more general 
setting in \cite{AaNaSaSo}.  The aim of this program  is to classify all the ergodic measures of the 
associated skew products, this is in general difficult and has been achieved only for special base maps.
In particular, in the frame work of translation surfaces such a classification has been achieved 
for certain infinite translation surfaces in \cite{HuWe} and \cite{Ho}.

 \section{Proof of the staircase result}\label{secStair}
 
\begin{proofof}{Theorem \ref{main-stair}} 
 Throughout the proof we will occasionally confound the parameter $\w$ with the staircase $S_\w$.

The strategy of the proof is as follows: we choose a dense set $\{S_{\w^i}\}$ of staircases which satisfy the goal dynamical property of unique ergodicity in almost every direction on  compact sets which exhaust the staircase.  The parameter of these  staircases will satisfy $\w \in [0,1)^\Z \setminus (0,1)^\Z$.
Then we will show that staircases which are sufficiently well approximated by this dense set will satisfy 
 the  goal dynamical property on the whole phase space.  

A staircase $S_\w$ is \textit{$N$-ringed}  if $w_N = w_{-N} = 0$ and  $w_j \not \in \{0,1\}$ for all $|j| < N$ (the name ringed comes from the corresponding topologically more complicated definition for wind-tree
configurations). We will consider the dynamics on the set $X^N$, i.e., inside the ring, the set $X \setminus X^N$ 
plays no role in our proof.

 Let $\{\w^i\}$ be a dense set of parameters such that each $S_{\w^i}$ is an $N_i$-ringed configuration, with $N_i$ increasing with $i$.
 
 By \cite{KeMaSm} the translation flow is uniquely ergodic (in the classical finite measure sense) in almost every direction inside the ring, and thus  the  map $T^{\w^i,\theta}|_{X^N}$
is also uniquely ergodic in almost every direction. To transfer this finite area unique ergodicity to sufficiently well approximable generic staircases, we suppose that $\delta_i$ are strictly positive numbers and consider the dense $G_\delta$ set
$$ 
\G := \bigcap_{m = 1}^{\infty}   \bigcup_{i=m}^{\infty} \U_{\delta_i}(\w^i).
$$
We will show that the $\delta_i$ can be chosen in such a way
that $\G \subset (0,1)^\Z$ (i.e., the staircases in $\G$ are connected), and all the staircases in $\G$ have a unique (up to scaling) Radon $T^{\theta}$-invariant ergodic measure for all $\theta\in\mathcal H$, where $\mathcal H$ is a $G_\delta$ set of full measure that will be constructed in the proof. 

For each $N \in \N$ let  
$$\{h^{N}_j: X^N \to \R\}_{j \ge 1} $$ 
be a countable collection of continuous, strictly positive functions  which are dense
with respect to the sup norm in the set of all continuous non-negative functions.
We can think of $h^N_j$ as a function with compact support defined on  $X$.
We  choose an enumeration  $\{h_j\}_{j \ge 1}$  of the set $\cup_{j,n} \{h^N_j\}$.  By construction, this
collection is dense with respect to the sup norm in the set of all continuous non-negative functions on $X$.

Fix a surface $S_{\w}$, a direction $\theta \in \SS1$,
a point $z \in X$, and $(j,n) \in \N^2$. Consider the Hopf average
$$H^{\w,\theta}_{j,n,\ell}( z)  := \frac{ \sum_{k=0}^{\ell} h_j \big ((T^{\w,\theta})^k( z)  \big)}{\sum_{k=0}^{\ell} h_n \big ((T^{\w,\theta})^k( z)  \big)}.$$

Our strategy is to study unique ergodicity via Hopf averages.  We start with the ringed configuration $\w^i$. We consider the Hopf averages of a finite subcollection of the $\{h_j\}$ and the times $\ell_i$ where these averages almost converge. We will show that for a small perturbation $S_\w$ of $S_{\w^i}$, for many directions, for each point $z$ either the forward or the backward
orbit segment of length $\ell_i$ stays close enough to a good orbit on $S_{\w^i}$ to control the Hopf averages on $S_\w$.  Since for a dense $G_\delta$ this behavior happens on infinitely many scales, we can  conclude that the forward or the backward Hopf average converges to 
the ratio of the integrals with respect to Lebesgue measure.   Then we apply  
the following  criterion for unique ergodicity.

\begin{lemma}\label{lemma6}
Fix $\w \in (0,1)^{\Z},\theta \in \mathbb{S}^1$. Suppose that there exists an increasing sequence $\ell_k$ tending to infinity such that   for each $z \in X$ either
\begin{eqnarray*}
\lim_{k \to \infty} H^{\w,\theta}_{j,n,\ell_k}(z) & =  & \frac{\int_{X} h_j(y) \, d\mu(y)}{\int_{X} h_n(y) \, d\mu(y)} \  \forall  
 (j,n) \in \N^2  \hbox{ or }\\
\lim_{k \to \infty} H^{\w,\theta}_{j,n,-\ell_k}(z) & = &  \frac{\int_{X} h_j(y) \, d\mu(y)}{\int_{X} h_n(y) \, d\mu(y)} \  \forall  (j,n) \in \N^2,
\end{eqnarray*}
then, 
up to scaling, there exists at most one conservative, $T^{\w,\theta}$-invariant, ergodic, Radon measure; if it exists it is the measure $\mu$.
\end{lemma}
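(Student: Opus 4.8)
The plan is to show that \emph{every} conservative, $T^{\w,\theta}$-invariant, ergodic Radon measure $\nu$ is a positive scalar multiple of $\mu$; this immediately yields the ``at most one up to scaling, and if it exists it is $\mu$'' conclusion. The only tool I would use is Hopf's ratio ergodic theorem, whose hypotheses (invertible measure-preserving transformation, conservative, ergodic, $\sigma$-finite) are exactly what is granted for $\nu$.

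First I would fix such a $\nu$ and set $T := T^{\w,\theta}$. Since $T$ is invertible and $(X,\nu,T)$ is conservative and ergodic, so is $(X,\nu,T^{-1})$ (conservativity and the $T$-invariant $\sigma$-algebra are unchanged by inversion, and $\nu$ is $T^{-1}$-invariant). Because $\nu$ is Radon it is finite on each compact set $X^N$, so every $h_j$ lies in $L^1(\nu)$. As $\nu \neq 0$ and the supports of the $h_j$ exhaust $X$, I can fix one index $n_0$ with $\int_X h_{n_0}\,d\mu>0$ \emph{and} $\int_X h_{n_0}\,d\nu>0$ (any $h_j$ that is strictly positive on an $X^N$ of positive $\nu$-mass works). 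Applying Hopf's theorem to $T$ and to $T^{-1}$, with numerator $h_j$ and denominator $h_{n_0}$, gives for each $j$ a set of full $\nu$-measure on which both the forward average $H^{\w,\theta}_{j,n_0,\ell}(z)$ and the backward average $H^{\w,\theta}_{j,n_0,-\ell}(z)$ converge, as $\ell\to\infty$, to $\big(\int h_j\,d\nu\big)/\big(\int h_{n_0}\,d\nu\big)$. Intersecting over the countably many $j$ produces a single set $G_\nu$ of full $\nu$-measure on which all these limits hold simultaneously; in particular they hold along the subsequence $\ell_k$.

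Next I would combine this with the hypothesis. Fix $z\in G_\nu\subset X$. By hypothesis either the forward or the backward Hopf average along $\ell_k$ converges to $\big(\int h_j\,d\mu\big)/\big(\int h_{n_0}\,d\mu\big)$ for all pairs. But on $G_\nu$ \emph{both} the forward and the backward limits equal $\big(\int h_j\,d\nu\big)/\big(\int h_{n_0}\,d\nu\big)$, so whichever alternative of the hypothesis holds forces
$$\frac{\int_X h_j\,d\nu}{\int_X h_{n_0}\,d\nu}=\frac{\int_X h_j\,d\mu}{\int_X h_{n_0}\,d\mu}\qquad\text{for every }j.$$
Setting $c:=\big(\int_X h_{n_0}\,d\nu\big)/\big(\int_X h_{n_0}\,d\mu\big)>0$, this reads $\int_X h_j\,d\nu = c\int_X h_j\,d\mu$ for all $j$.

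Finally I would upgrade this from the countable family $\{h_j\}$ to an equality of measures. Each $X^N$ is compact and open in $X$ (topologically $X$ is a disjoint union of circles, and $X^N$ consists of finitely many of them), so every $f\in C_c(X)$ is supported in some $X^N$ and, writing $f=f^+-f^-$, is approximated in sup norm by the strictly positive $h_j$ supported in that same $X^N$. As $\mu$ and $\nu$ are both finite on $X^N$, uniform approximation passes to the integrals, giving $\int_X f\,d\nu = c\int_X f\,d\mu$ for all $f\in C_c(X)$; by the Riesz representation theorem $\nu=c\mu$. Thus any conservative ergodic Radon invariant measure is a rescaling of $\mu$, as claimed (and if $\mu$ itself fails to be conservative, no such $\nu$ exists, which is still consistent with the statement). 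The step I expect to require the most care is the correct two-sided application of Hopf's theorem together with the choice of a denominator $h_{n_0}$ of strictly positive $\nu$-mass; the genuinely substantive difficulty, namely verifying the everywhere-pointwise convergence \emph{hypothesis} of the lemma, is carried out elsewhere in the proof and is not needed here.
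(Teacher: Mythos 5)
Your proof is correct and follows essentially the same route as the paper's: apply Hopf's ratio ergodic theorem in both time directions to the candidate measure, use the everywhere forward-or-backward hypothesis at an a.e.\ point to equate the $\nu$-ratios with the $\mu$-ratios, and then use density of the family $\{h_j\}$ to upgrade $\int h_j\,d\nu = c\int h_j\,d\mu$ to $\nu = c\mu$. The only (harmless) differences are cosmetic: you approximate arbitrary $f \in C_c(X)$ uniformly and invoke Riesz representation where the paper approximates indicators of intervals pointwise via a triangle-inequality estimate, and you are slightly more explicit than the paper about choosing a denominator $h_{n_0}$ with positive $\nu$-mass.
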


\begin{proof}
Suppose that $m$ is a conservative, $T^{\w,\theta}$-invariant, ergodic Radon measure.  Fix $n$, by assumption we have $\int_{X} h_n(y) \, d\mu(y) > 0$ and $\int_{X} h_n(y) \, dm(y) > 0$.
By the Hopf ergodic theorem, for every $j \in \N$ and  $m$-a.e.\ $z \in X$ we have
$$\lim_{\ell \to \infty} H^{\w,\theta}_{j,n,\ell}(z)  =  \lim_{\ell \to \infty} H^{\w^i,\theta}_{j,n,-\ell}(z) = 
\frac{\int_{X} h_j(y) \, dm(y)}{\int_{X} h_n(y) \, dm(y)} .$$ 
But
by the assumptions of this lemma, this number  must coincide with $\frac{\int_{X} h_j(y) \, d\mu(y)}{\int_{X} h_n(y) \, d\mu(y)} $. 
Fix some $n \in \N$, then for each $j \in \N$ we have
\begin{equation}\label{proporsc}
\int_{X} h_j(y) \, dm(y)
=
C \int_{X} h_j(y) \, d\mu(y)
\end{equation}
with 
$$C = \frac{\int_{X} h_n(y) \, dm(y)}{\int_{X} h_n(y) \, d\mu(y)}.$$

Consider any  interval $Y := \{k\} \times \hat Y  \subset X$.  
We apply the triangle inequality.
\begin{align}\label{trisc}
& |m(Y) - C \mu(Y) | < \\ \nonumber
& \quad \quad \quad \quad
 \big | m(Y) - \int_{X} h_{j}(y) \, dm(y)  \big |  +\\ \nonumber
&\quad \quad \quad \quad \quad \quad
 \big |   \int_{X} h_{j}(y) \, dm(y)  -  C \int_{X} h_{j}(y) \, d\mu(y)\big |  + \\ \nonumber
&  \quad \quad \quad  \quad \quad \quad \quad \quad
 \big |  C \int_{X} h_{j}(y) \, d\mu(y) - C \mu(Y) \big | . \nonumber
 \end{align}

By the assumption of the density of the functions $\{h_j\}$ we can find a subsequence $\{h_{j_k}\}$ so that $\lim_{k \to \infty} h_{j_k} (x) = 1$ if $x \in int(Y)$ and $0$ if $x$ is not in the closure
of $Y$.  Thus since $m,\mu$ are Radon measures, for each $\e > 0$ we can choose $j_0$ such that

$$\max \Big (\big |m(Y) - \int_{X} h_{j_0}(y) \, dm(y) \big  |,  C \big |\mu(Y) - \int_{X} h_{j_0}(y) \, d\mu(y)\big  | \Big )< \e.$$
Combining this with Equations \eqref{proporsc} and \eqref{trisc} yields
$ |m(Y) - C \mu(Y) | < 2\e$.  
Since $\e > 0$ is arbitrary we conclude that $m = C \mu$.
\end{proof}

Notice that in the above lemma, the only rigidity we proved is with respect to conservative ergodic measures. But, the only $T^{\w,\theta}$-invariant ergodic measure on $X$  which are not conservative, are measures supported on a single, bi-infinite non-periodic orbit of $T^{\w,\t}$. Such a measure is not a Radon measure if the bi-infinite orbit is dense because every neighborhood is visited an infinite number of times. Thus combining Lemma \ref{lemma6} and Proposition \ref{prop:cons} yields

\begin{corollary}
If the assumptions of Lemma \ref{lemma6} are verified, then up to scaling, the length measure $\mu$ is the unique 
$T^{\w,\theta}$-invariant, ergodic Radon measure.
\end{corollary}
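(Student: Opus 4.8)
The plan is to show that any $T^{\w,\theta}$-invariant ergodic Radon measure $m$ is, up to scaling, equal to $\mu$, and separately that $\mu$ is itself such a measure. The natural split is via the Hopf decomposition: $X$ decomposes into a conservative part and a dissipative part, both $T^{\w,\theta}$-invariant, and since $m$ is ergodic it is carried entirely by one of the two. In the conservative case there is nothing to add: Lemma \ref{lemma6} applies verbatim and forces $m = C\mu$ for some $C>0$. So the real content is to eliminate the dissipative case and to verify that $\mu$ genuinely belongs to the scope of Lemma \ref{lemma6}.

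For the dissipative case I would first recall that an ergodic measure concentrated on the dissipative part is supported on a single bi-infinite orbit $O = \{(T^{\w,\theta})^n z_0\}_{n\in\Z}$, since ergodicity collapses the underlying wandering set to one orbit; moreover this orbit is non-periodic, because a periodic orbit is recurrent, hence conservative. Such a measure is a multiple of the counting measure $\sum_{n} \delta_{(T^{\w,\theta})^n z_0}$. If the orbit is dense it meets every nonempty open set infinitely often, so the measure is not locally finite and therefore not Radon. The one remaining possibility — an orbit that escapes to infinity in the $\Z$-direction and meets each $X^N$ only finitely often, which \emph{would} be Radon — is ruled out directly by the hypothesis of Lemma \ref{lemma6}: along such a wandering orbit each compactly supported $h_j$ is encountered only finitely many times, so both $H^{\w,\theta}_{j,n,\ell}(z_0)$ and $H^{\w,\theta}_{j,n,-\ell}(z_0)$ stabilize as $\ell\to\infty$ to ratios of finite sums. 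These ratios cannot simultaneously equal $\int_X h_j \, d\mu / \int_X h_n \, d\mu$ for all pairs $(j,n)$, since the latter are the ratios of an infinite, non-atomic measure; this contradicts the assumed convergence at the point $z_0$. Hence no dissipative ergodic Radon measure exists.

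It then remains to certify $\mu$ as a conservative ergodic Radon invariant measure. It is invariant (the return map preserves length) and Radon (length is locally finite), and by Proposition \ref{prop:cons} the system $(T^{\w,\theta},X,\mu)$ is conservative. Ergodicity of $\mu$ I would extract from the hypothesis of Lemma \ref{lemma6} together with the Hopf ratio ergodic theorem: if $\mu$ were non-ergodic, its ergodic decomposition into locally finite (hence conservative, by Proposition \ref{prop:cons}) components would, by the argument of Lemma \ref{lemma6}, make each component proportional to $\mu$, forcing $\mu$ to be ergodic after all. Combining this with the conservative and dissipative analyses above shows that $\mu$ is, up to scaling, the unique $T^{\w,\theta}$-invariant ergodic Radon measure.

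The main obstacle is the dissipative case: correctly identifying non-conservative ergodic measures as single-orbit counting measures, and then excluding not only the dense orbits (which fail to be Radon) but also the more delicate escaping orbits that would be Radon. It is precisely here that the per-point strength of the Lemma \ref{lemma6} hypothesis — convergence of the Hopf averages at \emph{every} $z$, rather than merely $\mu$-almost everywhere — is indispensable, since a measure-zero wandering orbit is invisible to any almost-everywhere statement.
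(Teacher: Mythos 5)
Your proof is correct and follows the same route as the paper's own (very short) argument: Lemma \ref{lemma6} disposes of conservative measures, non-conservative ergodic invariant measures are identified as multiples of counting measures on single non-periodic bi-infinite orbits, and Proposition \ref{prop:cons} gives conservativity of $\mu$. In fact you are more careful than the paper at the one delicate point: the paper only remarks that the counting measure on a \emph{dense} orbit fails to be Radon, and says nothing about orbits escaping to infinity, whose counting measures \emph{are} locally finite, invariant and ergodic, and hence would contradict the statement if they existed. Your observation that the everywhere-pointwise hypothesis of Lemma \ref{lemma6} excludes such orbits --- both the forward and the backward Hopf ratios stabilize to ratios of finite atomic sums, which (by the same density argument used inside the proof of Lemma \ref{lemma6}) would force an atomic measure to be a positive multiple of the non-atomic $\mu$ --- is exactly what is needed to close this case, and it is indeed the place where the per-point, rather than $\mu$-a.e., nature of the hypothesis is essential. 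Two minor points. First, your case split ``dense or escaping'' is not exhaustive: the correct dichotomy is between orbits meeting some compact set $\overline{X^N}$ infinitely often (counting measure not Radon, by the same argument you give for dense orbits) and orbits meeting every $X^N$ only finitely often (your escaping case). Second, your sketch of the ergodicity of $\mu$ itself relies on the ergodic decomposition of an infinite locally finite measure into locally finite components; this is consistent with the paper, which invokes the same fact in the proof of Theorem \ref{thm:periodic}, but note that the paper's own proof of the corollary does not address this existence point at all.
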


Thus for our proof we need to construct the set $\mathcal{H}$ and show that
 for every $\w \in \G$ the convergence suppositions of the Lemma hold for $\theta \in \mathcal{H}$, and  all $(j,n) \in \N^2$.

Let $J_i \in \N$ be the maximal $J \in \N$ such that for all $1 \le j \le J$ the support of $h_j$ is contained in $X^{N_i}$, i.e., inside the 
 the ring of $S_{\w^i}$. Let $\J_i := \{1,\dots,J_i\}^2$. Notice that  $J_{i+1} \ge J_i$ and that $\lim_{i \to \infty} J_i = \infty$.

For each $i$ let $A_i$ be the set of directions for which the map $T^{\w^i,\theta}$ is  uniquely ergodic when 
restricted to $X^{N_i}$. 
By \cite{KeMaSm} the set $A_i$ is of full measure. We additionally assume that  $A_i$ does not contain a saddle connection direction, this
removes an at most countable set from the set of uniquely ergodic directions.
Let $\gamma_i > 0$ be a sequence tending to $0$ and fix $i$.  We apply the Corollary \ref{corcor} of  the appendix on the uniform convergence of
Hopf averages  to any  $\w^i$, $\theta \in A_i$,  $(j,n) \in \J_i$
 to conclude that there is an  $\hat \ell$ which
depends on $i,\theta,j,n$ such that 
\begin{equation}
\Big |H^{\w^i,\theta}_{j,n,\ell}(z) -\frac{\int_{X} h_j(y) \, d\mu(y)}{\int_{X} h_n(y) \, d\mu(y)} \Big | < \gamma_i\label{e}
\end{equation}
for all $\ell \ge \hat \ell$ and all  $z \in  X^{N_i}$ (except those $z$ whose orbit hits a singular point before time $\ell$).
Choose $\hat \ell$ sufficiently large so that \eqref{e}
holds for all $(j,n) \in  \J_i$; note that $\hat \ell$ depends only on $\theta \in A_i$ and $i$.

Next we uniformize this estimate to a large set of directions. 
 We choose  ${\ell_i} \ge N_i$ and sets $B_i \subset A_i \subset \SS1$  so that $\lambda(B_i^c) < \gamma_i$  and 
 $\hat \ell(i,\theta) \le \ell_i$ for all $\theta \in B_i$. 
 Consider the finite set of saddle connections of length at most
 $2 \ell_i$, and a small open neighborhood of this set. By choosing the open set very small and 
 modifying $B_i$ we can additionally assume that 
it does not intersect this neighborhood
 
Now we would like to extend these estimates to the neighborhood $\U_{\delta_i}(\w^i)$
for a sufficiently small strictly positive $\delta_i$. We will impose various conditions on $\delta_i$ in an incremental way.
The first requirement is
\begin{itemize}
\item[\bf A0] $\delta_i< \min \{\w^i_j: -n_i < j < n_i\}$.
\end{itemize}
This requirement ensures that for all $\w \in \U_{\delta_i}(\w^i)$ we have $\w_j > 0$ for all $-n_i < j < n_i$ which will
ensure that the tables in the dense $G_\delta$ set $G$ are connected.

We will explicitly state the next three requirements, {\bf A1 - A3} for forward orbits, the analogous requirements are also assumed  for
backward orbits, but will not be stated explicitly.
Our goal is now to define a small neighborhood $C_i$ of $B_i$, and then
 a large open subset $D_i$ of $C_i$, and for each $\theta \in D_i$ to define a map
$\zeta^+ = \zeta^{+}(\w^i,\w,\theta)$ defined on a large subset of $X^{N_i}$ (for $\w$) onto $X^{N_i}$ (for $\w^i$) which is close to the identity, and a map $\zeta^-$ satisfying
similar conditions. The $+$ maps will control the future behavior of orbits, the $-$ map will control the past.  In one of our first articles on the Ehrenfest model where we studied  ergodicity \cite{MSTr2}  it was sufficient to use the identity map for the maps $\zeta^\pm$. In our article on infinite ergodic index \cite{MSTr4} we 
 we have defined a related map, but the form of the map considered in that article
is not adapted to the study of unique ergodicity.

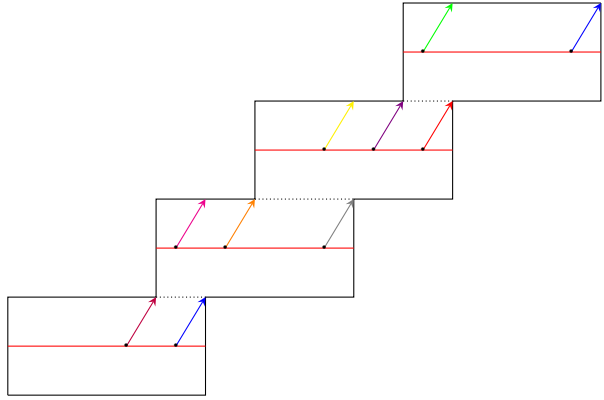
\begin{figure}[h]
\begin{tikzpicture}[scale=1.3]

 \draw[] (0,0) -- (2,0) -- (2,1)-- (3.5,1)--(3.5,2) --(4.5,2) -- (4.5,3) -- (6,3) -- (6,4);
  \draw[] (0,0) -- (0,1) -- (1.5,1)-- (1.5,2)-- (2.5,2) -- (2.5,3) -- (4,3) -- (4,4) -- (6,4); 
\draw[densely dotted] (1.5,1) -- (2,1);
\draw[densely dotted] (2.5,2) -- (3.5,2);
\draw[densely dotted] (4,3) -- (4.5,3) ;
\draw[red] (0,0.5) -- (2,0.5);
\draw[red] (1.5,1.5) -- (3.5,1.5);
\draw[red] (2.5,2.5) -- (4.5,2.5);
\draw[red] (4,3.5) -- (6,3.5);

\draw[blue, ->, >=stealth] (1.7,0.5) -- (2,1);
\draw[purple, ->, >=stealth] (1.2,0.5) -- (1.5,1);
\draw[gray, ->, >=stealth] (3.2,1.5) -- (3.5,2);
\draw[orange, ->, >=stealth] (2.2,1.5) -- (2.5,2);
\draw[magenta, ->, >=stealth] (1.7,1.5) -- (2,2);
\draw[red, ->, >=stealth] (4.2,2.5) -- (4.5,3);
\draw[violet, ->, >=stealth] (3.7,2.5) -- (4,3);
\draw[yellow, ->, >=stealth] (3.2,2.5) -- (3.5,3);
\draw[green, ->, >=stealth] (4.2,3.5) -- (4.5,4);
\draw[blue, ->, >=stealth] (5.7,3.5) -- (6,4);

\node at (1.7,0.5) {$\cdot$};
\node at (1.2,0.5) { $\cdot$};
\node at (3.2,1.5) { $\cdot$};
\node at (2.2,1.5) { $\cdot$};
\node at (1.7,1.5) { $\cdot$};
\node at (4.2,2.5) { $\cdot$};
\node at (3.7,2.5) { $\cdot$};
\node at (3.2,2.5) { $\cdot$};
\node at (5.7,3.5) { $\cdot$};
\node at (4.2,3.5) { $\cdot$};

\end{tikzpicture}
\caption{The set $\Sigma^{\w^i,\theta,N_i}$.}\label{figa}
\end{figure}

\begin{figure}[h]
\begin{tikzpicture}[scale=1.3]

 \draw[] (0,0) -- (2,0) -- (2,1)-- (3.5,1)--(3.5,2) --(4.5,2) -- (4.5,3) -- (6,3) -- (6,4);
  \draw[] (0,0) -- (0,1) -- (1.5,1)-- (1.5,2)-- (2.5,2) -- (2.5,3) -- (4,3) -- (4,4) -- (6,4); 
\draw[densely dotted] (1.5,1) -- (2,1);
\draw[densely dotted] (2.5,2) -- (3.5,2);
\draw[densely dotted] (4,3) -- (4.5,3) ;
\draw[red] (0,0.5) -- (2,0.5);
\draw[red] (1.5,1.5) -- (3.5,1.5);
\draw[red] (2.5,2.5) -- (4.5,2.5);
\draw[red] (4,3.5) -- (6,3.5);

\draw[blue, ->, >=stealth] (1.7,0.5) -- (2,1);
\draw[purple, ->, >=stealth] (1.2,0.5) -- (1.5,1);
\draw[gray, ->, >=stealth] (3.2,1.5) -- (3.5,2);
\draw[orange, ->, >=stealth] (2.2,1.5) -- (2.5,2);
\draw[magenta, ->, >=stealth] (1.7,1.5) -- (2,2);
\draw[red, ->, >=stealth] (4.2,2.5) -- (4.5,3);
\draw[violet, ->, >=stealth] (3.7,2.5) -- (4,3);
\draw[yellow, ->, >=stealth] (3.2,2.5) -- (3.5,3);
\draw[green, ->, >=stealth] (4.2,3.5) -- (4.5,4);
\draw[blue, ->, >=stealth] (5.7,3.5) -- (6,4);

\node at (1.7,0.5) {$\cdot$};
\node at (1.2,0.5) { $\cdot$};
\node at (3.2,1.5) { $\cdot$};
\node at (2.2,1.5) { $\cdot$};
\node at (1.7,1.5) { $\cdot$};
\node at (4.2,2.5) { $\cdot$};
\node at (3.7,2.5) { $\cdot$};
\node at (3.2,2.5) { $\cdot$};
\node at (5.7,3.5) { $\cdot$};
\node at (4.2,3.5) { $\cdot$};

\draw[blue, dashed] (5.4,3) -- (5.7, 3.5);
\draw[blue, dashed] (5.4,4) -- (5.1, 3.5);
\node at (5.1,3.5) {$\cdot$};
\draw[blue, dashed] (5.1,3.5) -- (4.8,3);
\draw[blue, dashed] (4.8,4) -- (4.5,3.5);
\node at (4.5,3.5) {$\cdot$};

\draw[green, dashed] (4.2,3.5) -- (4,3.15);
\draw[green, dashed] (6,3.15) -- (5.9,3);
\draw[green, dashed] (5.9,4) -- (5.3,3);
\draw[green, dashed] (5.3,4) -- (5,3.5);
\node at (5,3.5) {$\cdot$};
\node at (5.6,3.5) {$\cdot$};

\draw[red, dashed] (4.2,2.5) -- (3.9,2);
\draw[red, dashed]  (3.9,3) -- (3,1.5);
\node at (3,1.5) {$\cdot$};
\node at (3.6,2.5) {$\cdot$};

\draw[violet, dashed] (3.7,2.5) --(2.8,1);
\draw[violet, dashed] (2.8,3) -- (2.5,2.5);
\node at (2.5,2.5) {$\cdot$};
\node at (3.1,1.5) {$\cdot$};

\draw[yellow, dashed]  (3.2,2.5) -- (2.3,1);
\draw[yellow, dashed] (2.3,2) -- (2,1.5);
\node at (2,1.5) {$\cdot$};
\node at (2.6, 1.5) {$\cdot$};

\draw[gray, dashed] (3.2,1.5) --(2.9,1);
\draw[gray, dashed] (2.9, 3) -- (2.5,2.35);
\node at (2.6,2.5) {$\cdot$};
\draw[gray, dashed] (4.5,2.35) -- (4.3,2);
\draw[gray, dashed] (4.3,4) -- (4,3.5);
\node at (4,3.5) {$\cdot$};

\draw[orange, dashed] (2.5,2) -- (1.3,0);
\draw[orange, dashed] (1.3,1) -- (1,0.5);
\node at (1,0.5) {$\cdot$};
\node at (1.6,0.5) {$\cdot$};

\draw[magenta, dashed] (1.7,1.5) -- (1.5,1.15);
\draw[magenta, dashed] (3.5,1.15) -- (3.4,1);
\draw[magenta, dashed] (3.4,3) -- ( 2.5,1.5);
\node at (2.5,1.5) {$\cdot$};
\node at (3.1,2.5) {$\cdot$};

\draw[blue, dashed] (1.7,0.5) -- (1.4,0);
\draw[blue, dashed] (1.4,1) -- (0.8,0);
\node at (1.1,0.5) {$\cdot$};
\draw[blue, dashed]  (0.8,1) -- (0.5,0.5);
\node at (0.5,0.5) {$\cdot$};

\draw[purple, dashed] (1.2,0.5) -- (0.9,0);
\draw[purple, dashed] (0.9,1) -- (0.3,0);
\node at (0.6,0.5) {$\cdot$};
\draw[purple, dashed]  (0.3,1) -- (0,0.5);
\node at (0,0.5) {$\cdot$};
\end{tikzpicture}
\caption{Partition of $X^{N_i}$ into sets of continuity of $(T^{\w^i,\theta})^{\ell_i}$.}\label{figb}
\end{figure}
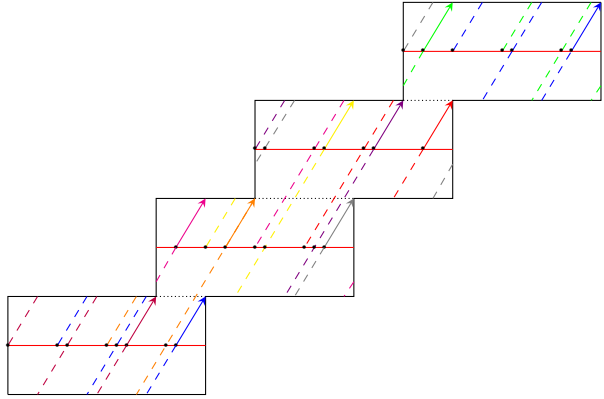

To define $\zeta^+$ for a direction $\theta \in B_i$ which has no saddle connection shorter than $2\ell_i$, we develop the argumentation when $\theta \in \SS1$
 is in the interior of the first quadrant, and leave to the reader to derive the corresponding formulas for $\theta$ in the other quadrants.
   Consider the finite collection of  points
   $$\Sigma^{\w^i,\theta,N_i} := \big \{(k,j - 1/2\tan(\theta)):  j \in \{\w^i_{k-1},2-\w^i_k,2\} \big \} \cap X^{N_i}$$
(see Figure \ref{figa}). $\Sigma^{\w^i,\theta,N_i}$ consists of  the points inside the ring for which the trajectory hits a singularity before hitting the next cross-section.

 All the points in $\Sigma^{\w^i,\theta,N_i}$ are distinct except for the pair of points
 $\{N_i\}\times\{2 -1/2\tan(\theta)\}$ and  $\{N_i\}\times\{2 - \w^i_{N_i} - 1/2\tan(\theta)\}$ which are the same point since $\w^i_{N_i} = 0$,
as well as the pair of points
  $\{-N_i+1\}\times\{2 -1/2\tan(\theta)\}$ and  $\{-N_i + 1\}\times\{2 - \w^i_{-N_i} - 1/2\tan(\theta)\}$
 which are the same point since $\w^i_{-N_i} = 0$.
We call such a point $z$ a \textit{blocking point}. Here, and at the discussion below, it's worth to recall that for each integer $k$, $\{k\}\times[0,2)$ is a circle and we will always consider the coordinates of the circle modulo 2.
 
 Consider the collection of  points 
 $$\hat \Sigma^{\w^i,\theta,N_i} := \{(T^{\w^i,\theta})^{-j}z : j \in \{0,1,\dots,\ell_i\} \text{  and  }  z\in \Sigma^{\w^i,\theta,N_i} \}.$$
{Any point in this set corresponds to a unique $j \in \{0,\dots,\ell_i\}$ and a unique $z \in \Sigma^{\w^i,\theta,N_i}$.} This set
 partitions $X^{N_i}$ into a finite number of intervals, $ \{I_j^{\w^i,\theta}\}$, such that on each of these
intervals the forward  map $(T^{\w^i,\theta})^{\ell_i}$ is continuous (see Figure \ref{figb}).
 This partition can be defined more generally than for $\theta \in B_i$, it makes sense as soon as 
as there are no saddle connections of length at most $2 \ell_i$.
Thus for $\theta \in B_i$ and a small enough  neighborhood  $U(\theta)$, each interval $I^{\w^i,\theta'}_j$ varies continuously with respect to $\theta' \in U(\theta)$.

Let $\eta_i(\theta) > 0$ be the minimum of the lengths of the intervals $I^{\w^i,\theta'}_j$.
Let  
$$ C_i := \bigcup_{\theta \in B_i} U(\theta).$$
We naturally extend the definition of the function $\eta_i(\cdot)$, originally defined on $B_i$, to the larger 
set $C_i$, it is a continuous function of $\theta$. We additionally suppose that $U(\theta)$ is 
sufficiently small so that $\eta(\theta') > \eta_i(\theta)/2$ for all
$\theta' \in U(\theta)$. By construction $C_i$ is an open set containing $B_i$, thus $\lambda(C_i^c)< \gamma_i$.
Furthermore, starting with \eqref{e}, by continuity in $\theta$ we can suppose that for all $\theta \in C_i$
we have
\begin{equation}
\Big |H^{\w^i,\theta}_{j,n,\ell}(z) -\frac{\int_{X} h_j(y) \, d\mu(y)}{\int_{X} h_n(y) \, d\mu(y)} \Big | < 2\gamma_i.\label{eeeest}
\end{equation}
for all $\ell \ge \hat \ell$ and all  $z \in  X^{N_i}$ (except those $z$ whose orbit hits a singular point before time $\ell$).

We can do the same for backward orbits, we will choose $\ell_i$ to work for forward and backward orbits.  We will denote by $C_i$ the open set of directions satisfying both the
forward and backward conditions.

For $\eta_i > 0$ consider the open set
$$D_i := \{\theta \in C_i : \eta_i(\theta) > \eta_i\}.$$

Since the function $\eta_i(\theta)$ is strictly positive for every $\theta \in C_i$, we can choose 
 a small positive number $\eta_i$ such that the set 
 $\eta_i > 0$ such that the Lebesgue measure of the 
 set $D_i$  is at least $1-\gamma_i$.
So for all $\theta \in D_i$ the lengths of all the intervals $ \{I_j^{\w^i,\theta} \}$
 are at least $\eta_i > 0$.

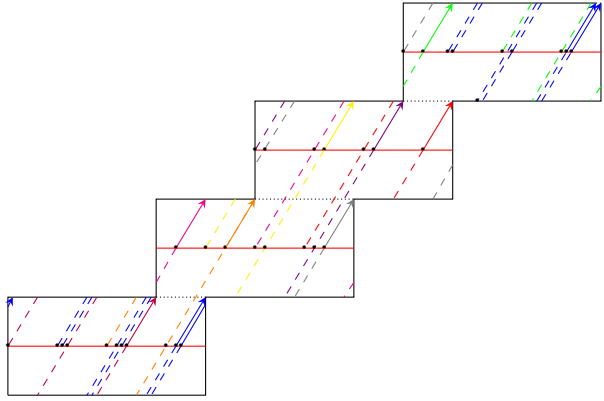
\begin{figure}[h]
\begin{tikzpicture}[scale=1.3]

 \draw[] (0,0) -- (2,0) -- (2,1)-- (3.5,1)--(3.5,2) --(4.5,2) -- (4.5,3) -- (6,3) -- (6,4);
  \draw[] (0,0) -- (0,1) -- (1.5,1)-- (1.5,2)-- (2.5,2) -- (2.5,3) -- (4,3) -- (4,4) -- (5.95,4); 
   \draw[densely dotted] (5.95,4) -- (6,4);
\draw[densely dotted] (1.5,1) -- (2,1);
\draw[densely dotted] (2.5,2) -- (3.5,2);
\draw[densely dotted] (4,3) -- (4.5,3) ;
\draw[red] (0,0.5) -- (2,0.5);
\draw[red] (1.5,1.5) -- (3.5,1.5);
\draw[red] (2.5,2.5) -- (4.5,2.5);
\draw[red] (4,3.5) -- (6,3.5);

\draw[blue, ->, >=stealth] (1.7,0.5) -- (2,1);
\draw[purple, ->, >=stealth] (1.2,0.5) -- (1.5,1);
\draw[gray, ->, >=stealth] (3.2,1.5) -- (3.5,2);
\draw[orange, ->, >=stealth] (2.2,1.5) -- (2.5,2);
\draw[magenta, ->, >=stealth] (1.7,1.5) -- (2,2);
\draw[red, ->, >=stealth] (4.2,2.5) -- (4.5,3);
\draw[violet, ->, >=stealth] (3.7,2.5) -- (4,3);
\draw[yellow, ->, >=stealth] (3.2,2.5) -- (3.5,3);
\draw[green, ->, >=stealth] (4.2,3.5) -- (4.5,4);
\draw[blue, ->, >=stealth] (5.7,3.5) -- (6,4);

\node at (1.7,0.5) {$\cdot$};
\node at (1.2,0.5) { $\cdot$};
\node at (3.2,1.5) { $\cdot$};
\node at (2.2,1.5) { $\cdot$};
\node at (1.7,1.5) { $\cdot$};
\node at (4.2,2.5) { $\cdot$};
\node at (3.7,2.5) { $\cdot$};
\node at (3.2,2.5) { $\cdot$};
\node at (5.7,3.5) { $\cdot$};
\node at (4.2,3.5) { $\cdot$};

\draw[blue, dashed] (5.4,3) -- (5.7, 3.5);
\draw[blue, dashed] (5.4,4) -- (5.1, 3.5);
\node at (5.1,3.5) {$\cdot$};
\draw[blue, dashed] (5.1,3.5) -- (4.8,3);
\draw[blue, dashed] (4.8,4) -- (4.5,3.5);
\node at (4.5,3.5) {$\cdot$};

\draw[green, dashed] (4.2,3.5) -- (4,3.15);
\draw[green, dashed] (6,3.15) -- (5.9,3);
\draw[green, dashed] (5.9,4) -- (5.3,3);
\draw[green, dashed] (5.3,4) -- (5,3.5);
\node at (5,3.5) {$\cdot$};
\node at (5.6,3.5) {$\cdot$};

\draw[red, dashed] (4.2,2.5) -- (3.9,2);
\draw[red, dashed]  (3.9,3) -- (3,1.5);
\node at (3,1.5) {$\cdot$};
\node at (3.6,2.5) {$\cdot$};

\draw[violet, dashed] (3.7,2.5) --(2.8,1);
\draw[violet, dashed] (2.8,3) -- (2.5,2.5);
\node at (2.5,2.5) {$\cdot$};
\node at (3.1,1.5) {$\cdot$};

\draw[yellow, dashed]  (3.2,2.5) -- (2.3,1);
\draw[yellow, dashed] (2.3,2) -- (2,1.5);
\node at (2,1.5) {$\cdot$};
\node at (2.6, 1.5) {$\cdot$};

\draw[gray, dashed] (3.2,1.5) --(2.9,1);
\draw[gray, dashed] (2.9, 3) -- (2.5,2.35);
\node at (2.6,2.5) {$\cdot$};
\draw[gray, dashed] (4.5,2.35) -- (4.3,2);
\draw[gray, dashed] (4.3,4) -- (4,3.5);
\node at (4,3.5) {$\cdot$};

\draw[orange, dashed] (2.5,2) -- (1.3,0);
\draw[orange, dashed] (1.3,1) -- (1,0.5);
\node at (1,0.5) {$\cdot$};
\node at (1.6,0.5) {$\cdot$};

\draw[magenta, dashed] (1.7,1.5) -- (1.5,1.15);
\draw[magenta, dashed] (3.5,1.15) -- (3.4,1);
\draw[magenta, dashed] (3.4,3) -- ( 2.5,1.5);
\node at (2.5,1.5) {$\cdot$};
\node at (3.1,2.5) {$\cdot$};

\draw[blue, dashed] (1.7,0.5) -- (1.4,0);
\draw[blue, dashed] (1.4,1) -- (0.8,0);
\node at (1.1,0.5) {$\cdot$};
\draw[blue, dashed]  (0.8,1) -- (0.5,0.5);
\node at (0.5,0.5) {$\cdot$};

\draw[purple, dashed] (1.2,0.5) -- (0.9,0);
\draw[purple, dashed] (0.9,1) -- (0.3,0);
\node at (0.6,0.5) {$\cdot$};
\draw[purple, dashed]  (0.3,1) -- (0,0.5);
\node at (0,0.5) {$\cdot$};

\draw[blue, ->, >=stealth] (5.65,3.5) -- (5.95,4);
\node at (5.65,3.5)  {$\cdot$};

\draw[blue, dashed] (5.35,3) -- (5.65, 3.5);
\draw[blue, dashed] (5.35,4) -- (4.75,3);
\node at (4.75,3)  {$\cdot$};
\draw[blue, dashed] (4.75,4) -- (4.45,3.5);
\node at (4.45,3.5) {$\cdot$};

\draw[blue] (1.75,0.5) -- (2,0.925);

\draw[blue, ->, >=stealth] (0.,0.9) -- (0.05,1);
\node at (1.75,0.5) {$\cdot$};

\draw[blue, dashed] (1.75,0.5) -- (1.45,0);
\draw[blue, dashed] (1.45,1) -- (0.85,0);

\node at (1.15,0.5) {$\cdot$};
\draw[blue, dashed]  (0.85,1) -- (0.55,0.5);
\node at (.55,0.5) {$\cdot$};
\end{tikzpicture}
\caption{The partition changes continuously, the blocking points bifurcate.}\label{figbb}
\end{figure}

Now we consider a configuration $\w$, not necessarily ringed, but close to a ringed configuration. 
Assume now  that $\theta \in D_i$ and note
that the point $(T^{\w,\theta})^{-j}z$ varies continuously with respect to $\w$ close to $\w^i$, for any $z$ which 
is not a blocking point for $\w^i$.

If $w$ is very close to $w_i$ such that $\w_{\pm N_i} \ne 0$, then each of the two blocking points, $\{N_i\}\times\{2 -1/2\tan(\theta)\}$ and $\{-N_i+1\}\times\{2 -1/2\tan(\theta)\}$,   bifurcates
into a pair of points for $S_{\w}$.
The bifurcated points create two new intervals $\tilde I_+^{\w,\theta}= \{N_i\} \times (2 - \w_{N_i} -1/2\tan(\theta) ,2  - 1/2\tan(\theta))$
and $\tilde I_-^{\w,\theta}= \{-N_i + 1\} \times (2 -1/2\tan(\theta) ,2 + \w_{-N_i} - 1/2\tan(\theta))$
 in the partition induced by the set $\Sigma^{w,\theta}$  (see Figure \ref{figbb}).  
 
 Then, if we consider 
the partition $\{I_j^{w,\theta}\}$ defined in a similar way as the partition $\{I_j^{\w^i,\theta}\}$, we have  
at most  $2(\ell_i+1)$ new intervals $\{(T^{\w,\theta})^{-j} \tilde I_\pm^{\w,\theta}: j \in \{0,\dots,\ell_i\}\}$. 
We call this collection of  new intervals $\mathcal{I}^{w,\theta,N_i,*}$. 

Let $V^{\w,\theta, N_i} := \cup_{I\in \mathcal{I}^{w,\theta,N_i,*}}I$ be the union of all the intervals in the 
collection $\{I_j^{w,\theta}\}$. Since $\{I_j^{\w^i,\theta}\}$ is in bijection with a subset of $\{I_j^{\w,\theta}\}$
(except for at most $2(\ell_i+1)$ new intervals) we can define an injective, piecewise affine transformation $\zeta^+ = \zeta^{+,\w^i,\w,n,\theta}$ 
from  $V^{\w^i,\theta, N_i}$ to $V^{\w,\theta, N_i}$ that sends  every $I_j^{\w^i,\theta}$ affinely to the
corresponding $I_j^{\w,\theta}$. Note that, for $\w$ sufficiently close to $\w^i$, this piecewise affine transformation is 
close to the identity.

It is important to underline that the definition of the map $\zeta^+$ guarantees that the points 
$z \in X\setminus V^{\w,\theta,N_i}$ 
for which the $T^{\w^i, \theta}$ forward orbit of $\zeta^+(z)$ hits a singular point before time $\ell_i$ are points
for which the $T^{\w, \theta}$ forward orbit of $z$ hits a singular point before time $\ell_i$.
Furthermore note that $X^{N_i} \setminus V^{\w,\theta,N_i}$ is the set of points where $\zeta^+$ is not defined. It consists of those points 
whose $T^{\w,\theta}$ forward orbit segment of length $\ell_i$ exits the ring. In Figure \ref{figbb} these are the points
in between the close blue orbit segments which have bifurcated from a blocking orbit segment.
 These forward orbit segments  have no usable relation to  $T^{\w^i,\theta}$ forward orbit segments of length $\ell_i$.

Fix $\theta \in \overline{D_i}$ and suppose that $\w$ is  close enough to $\w^i$
so that the above bijection is defined. Then we 
can define   $\eta_i(\w,\theta)$ to be the infimum length of all the intervals $I_j^{\w,\theta}$. 
For $\w$ sufficiently close to $\w^i$, for each $\theta \in C_i$, the function $\eta_i(\w,\theta)$ varies continuously with $\w$ since the set $\overline{C}_i$ does 
not contain any saddle connection directions of length at most $2 \ell_i$. Let 
 $\eta_i(\w) := \inf_{\theta \in \overline{D_i}} \eta_i(\w,\theta) \ge 0$.
 We have that $\eta_i(\w^i) = \eta_i(\w^i,\theta_0)$ for some $\theta_0 \in \overline{D_i}$ and
 thus $\eta_i(\w)$  is locally a continuous function of $\w$.

By the definition of $D_i$ we have $ \eta_i(\w^i) \ge \eta_i$, we require that 
\begin{itemize}
\item[\bf A1] 

$\delta_i$ is so small that $\eta_i(\w)$ varies continuously with $\w \in \mathcal{U}_{\delta_i}(\w^i)$ and
$\eta_i(\w) > \eta_i/2$ for each $\w \in \mathcal{U}_{\delta_i}(\w^i)$.

\noindent
We make the analogous assumption for the ordered sets arising from backward orbits.
\end{itemize}

 We require a further closeness condition on those orbits which we can control. 
\begin{itemize}
\item[\bf A2] We  assume that $\delta_i > 0 $ is so small that for all $\w \in \U_{\delta_i}(\w^i)$,
for all   $z \in X^{N_i}$ for which $\zeta^+$ is defined, for all $\theta\in D_i$, we have
 $$\big |H^{\w,\theta}_{j,n,\ell_i} (z) - H^{\w^i,\theta}_{j,n,\ell_i}(\zeta^+(z))\big |<\gamma_i.$$ 
 Again we make the analogous assumption for $\zeta^-$.
 \end{itemize}

In the proof we will only consider directions in $\overline{D_i}$, the complement are  directions where we are not able to control the Hopf averages.  As already mentioned, we also are not able to control the Hopf averages of the forward orbits of points 
in $X^{N_i} \setminus V^{\w,\theta,N_i}$.
Our main goal is to  control the backwards Hopf averages of such orbits.

A  key point of the proof is 
that if   $\theta \in D_i$, then  for each $z \in X^{N_i}$ 
at least one of  $\zeta^+(z)$ or  $\zeta^-(z)$ is defined.  To do this
consider any $\theta \in D_i$ and the finite set  $\hat{\Sigma}^+ := \hat{\Sigma}^{\w^i,\theta,N_i}$ and the analogous
set $\hat{\Sigma}^-$.  These two sets do not intersect, for if they did there would be a saddle connection of length at most $2\ell_i$.
Let $\iota_i(\theta) > 0 $ be the minimum of the distances between pairs of 
points in these two collections and $\iota_i := \inf_
{\theta \in \overline{D_i}} \iota_i(\theta)$.  Since $C_i$ avoids a neighborhood of the directions of the saddle connections of length at most $2\ell_i$ we
have $\iota_i > 0$.

We define $\iota_i(\w)$ in an analogous way.

\begin{itemize}
	\item[\bf A3] We assume that $\delta_i > 0 $ is so small that for each $\w \in U_{\delta_i} (\w^i)$ $\iota_i(\w) > \iota_i/2$.
\end{itemize}

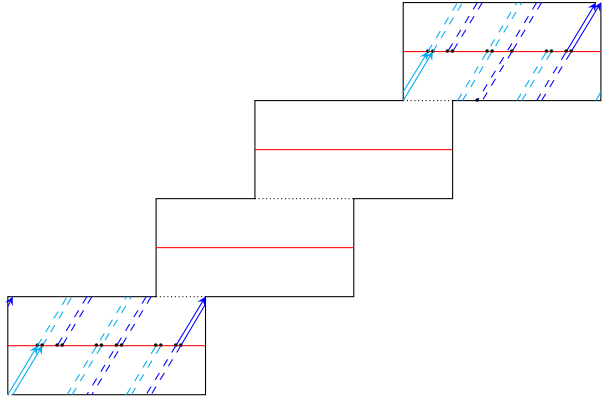
\begin{figure}[h]
\begin{tikzpicture}[scale=1.3]

 \draw[] (0.05,0) -- (2,0) -- (2,1)-- (3.5,1)--(3.5,2) --(4.5,2) -- (4.5,3) -- (6,3) -- (6,4);
 \draw[] (0,0) -- (0,1) -- (1.5,1)-- (1.5,2)-- (2.5,2) -- (2.5,3) -- (4,3) -- (4,4) -- (5.95,4);

  \draw[densely dotted] (0,0) -- (0.05,0);
 
\draw[densely dotted] (1.5,1) -- (2,1);
\draw[densely dotted] (2.5,2) -- (3.5,2);
\draw[densely dotted] (4,3) -- (4.5,3) ;
\draw[red] (0,0.5) -- (2,0.5);
\draw[red] (1.5,1.5) -- (3.5,1.5);
\draw[red] (2.5,2.5) -- (4.5,2.5);
\draw[red] (4,3.5) -- (6,3.5);

\draw[blue, ->, >=stealth] (1.7,0.5) -- (2,1);
\draw[blue, ->, >=stealth] (5.7,3.5) -- (6,4);
\node at (1.7,0.5) {$\cdot$};
\node at (5.7,3.5) { $\cdot$};
\draw[blue, dashed] (5.4,3) -- (5.7, 3.5);
\draw[blue, dashed] (5.4,4) -- (5.1, 3.5);
\node at (5.1,3.5) {$\cdot$};
\draw[blue, dashed] (5.1,3.5) -- (4.8,3);
\draw[blue, dashed] (4.8,4) -- (4.5,3.5);
\node at (4.5,3.5) {$\cdot$};
\draw[blue, dashed] (1.7,0.5) -- (1.4,0);
\draw[blue, dashed] (1.4,1) -- (0.8,0);
\node at (1.1,0.5) {$\cdot$};
\draw[blue, dashed]  (0.8,1) -- (0.5,0.5);
\node at (0.5,0.5) {$\cdot$};

\draw[cyan, ->, >=stealth]   (0,0) -- (0.3,0.5);
\node at (0.3,0.5) {$\cdot$};
\draw[cyan, dashed] (0.3,0.5) -- (0.6,1);
\draw[cyan, dashed] (0.6,0) -- (1.2,1);
\node at (0.9,0.5) {$\cdot$};
\draw[cyan, dashed] (1.2,0) -- (1.5,0.5);
\node at (1.5,0.5) {$\cdot$};

\draw[cyan, ->, >=stealth]    (4,3)--(4.3,3.5);
\node at (4.3,3.5) {$\cdot$};
\draw[cyan, dashed] (4.3,3.5) -- (4.6,4);
\draw[cyan, dashed] (4.6,3) -- (5.2,4);
\node at (4.9,3.5) {$\cdot$};
\draw[cyan, dashed] (5.2,3) -- (5.5,3.5);
\node at (5.5, 3.5) {$\cdot$};

\draw[blue, ->, >=stealth] (5.65,3.5) -- (5.95,4);
\node at (5.65,3.5)  {$\cdot$};
\draw[blue, dashed] (5.35,3) -- (5.65, 3.5);
\draw[blue, dashed] (5.35,4) -- (4.75,3);
\node at (4.75,3)  {$\cdot$};
\draw[blue, dashed] (4.75,4) -- (4.45,3.5);
\node at (4.45,3.5) {$\cdot$};

\draw[blue] (1.75,0.5) -- (2,0.925);
\draw[blue, ->, >=stealth] (0.,0.9) -- (0.05,1);
\node at (1.75,0.5) {$\cdot$};

\draw[blue, dashed] (1.75,0.5) -- (1.45,0);
\draw[blue, dashed] (1.45,1) -- (0.85,0);

\node at (1.15,0.5) {$\cdot$};
\draw[blue, dashed]  (0.85,1) -- (0.55,0.5);
\node at (0.55,0.5) {$\cdot$};

\draw[cyan, ->, >=stealth]   (0.05,0) --  (0.35,0.5) ;
\node at (0.95,0.5) {$\cdot$};
\draw[cyan, dashed] (0.35,0.5) -- (0.65,1);
\draw[cyan, dashed] (0.65,0) -- (1.25,1);
\node at (0.35,0.5) {$\cdot$};
\draw[cyan, dashed] (1.25,0) -- (1.55,0.5);
\node at (1.55,0.5) {$\cdot$};

\draw[cyan, ->, >=stealth]   (4,3.09) --  (4.25,3.5) ;
\node at (4.25,3.5) {$\cdot$};
\draw[cyan]    (5.95, 3) -- (6,3.09);
\draw[cyan, dashed] (4.25,3.5) -- (4.55,4);
\draw[cyan, dashed] (4.55,3) -- (5.15,4);
\node at (4.85,3.5) {$\cdot$};
\draw[cyan, dashed] (5.15,3) -- (5.45,3.5);
\node at (5.45, 3.5) {$\cdot$};

\end{tikzpicture}
\caption{Uncontrolled forward orbits do not intersect uncontrolled backward orbits.}\label{figc}
\end{figure}

Fix $i$, by the triangle inequality, for any $z \in X^{N_i}$ such that $\theta \in D_i$ and $\zeta^+(z)$ is defined,   we have

$$\begin{array}{ccc}
\Big |H^{\w,\theta}_{j,n,\ell_i} (z) - \frac{\int_{X} h_j(y,\psi') \, d\mu}{\int_{X} h_n(y,\psi') \, d\mu} \Big | & \hspace{-0.2cm} \le & \hspace{-0.4cm}  \Big |H^{\w,\theta}_{n,\ell_i} h_j(z) - H^{\w^i,\theta}_{n,\ell_i}h_j(\zeta^+(z))\Big | \\
&&  \hspace{0.2cm} + \Big |H^{\w^i,\theta}_{n,\ell_i} h_j(\zeta^+(z)) -  \frac{\int_{X} h_j(y,\theta) \, d\mu}{\int_{X} h_n(y,\theta) \, d\mu} \Big |. \\
\end{array}
$$
By {\bf A2}, the first term is smaller than $\gamma_i$. 
Now we consider the second term.

By assumption $\theta \in D_i$ and thus  
if the orbit of $\zeta^+(z)$ does not hits a singular point before time $\ell$, then \eqref{eeeest} implies

$$\Big |H^{\w^i,\theta}_{n,\ell_i} h_j(\zeta^+(z)) -   \frac{\int_{X} h_j(y,\theta) \, d\mu}{\int_{X} h_n(y,\theta) \, d\mu}  \Big | < 2 \gamma_i$$
for all $1 \le j\le i$, and $1 \le n \le N_i$. Combining with {\bf A2} we conclude
$$\Big |H^{\w,\theta}_{n,\ell_i} h_j(z) - \frac{\int_{X} h_j(y,\psi') \, d\mu}{\int_{X} h_n(y,\psi') \, d\mu} \Big | < 3 \gamma_i$$
for all $z$ where $\zeta^+$ is defined,  with $\theta \in D_i$, for all $(j,n) \in \J_i$, for all $\w \in \U_{\delta_i}(\w^i)$
as long as the $T^{\w^i,\theta}$ forward orbit of $\zeta^+(z)$ of length $\ell_i$ does not hit a singular point.
As already pointed out we can thus consider points  $z \in X^{N_i}$ such that $\zeta^+$ is defined an the 
the $T^{\w,\theta}$ forward orbit of $z$ of length $\ell_i$ does not hit a singular point.

We make the analogous estimate on the past  
$$\Big |H^{\w,\theta}_{n,-\ell_i} h_j(z) - \frac{\int_{X} h_j(y,\theta) \, d\mu}{\int_{X} h_n(y,\theta) \, d\mu} \Big | < 3 \gamma_i$$
for all $z$ where $\zeta^-$ is defined,  with $\theta \in D_i$, for all $(j,n) \in \J_i$, for all $g \in \U_{\delta_i}(\w^i)$
as long as the backwards orbit of $\zeta^-(z)$ of length $\ell_i$ is defined.

Using {\bf A3}, since $\lambda(D_i) > 1 - 2 \gamma_i$, the set 
$\mathcal{H} = \cap_{M=1}^\infty \cup_{i=M}^\infty D_i$ has full measure. 
Fix $\w \in \G$ and  $\theta \in \mathcal{H}$, then there is an infinite sequence $i_k$ such that $\w \in \U_{\delta_{i_k}}(\w^{i_k})$ and $\theta \in \Theta_{i_k}$ for all $k$.  
 Since  $J_{i_k} \to \infty$
we can conclude that 
for all $z \in X$ with $\theta \in \mathcal{H}$, for each $j \ge 1$, for each $n \ge 1$  either
\begin{equation}\label{e11}
\lim_{k \to \infty} H^{\w,\theta}_{j,n,\ell_{i_k}}(z) = \frac{\int_{X} h_j(y,\theta) \, d\mu}{\int_{X} h_n(y,\theta) \, d\mu}.
\end{equation}
or
\begin{equation}\label{e12}
\lim_{k \to \infty} H^{\w,\theta}_{j,n,-\ell_{i_k}}(z) =  \frac{\int_{X} h_j(y,\theta) \, d\mu}{\int_{X} h_n(y,\theta) \, d\mu}.
\end{equation}

Lemma \ref{lemma6}  finishes the proof of the theorem.
\end{proofof}

\section{Wind-tree proof}\label{secWT}
The proof of result in the wind-tree setting is essentially the same, there are three differences which we will 
point out. The first difference is the cross-section to the flow which we use.  In the wind-tree setting we will use
the first return map to the boundaries of the rhombi, and we will show that the structure of this cross-section is
quite similar to the cross-section we used for staircases.  The second difference is the description of ringed-configurations.
The final difference is the description of the set corresponding to $\Sigma_{\w^i,\theta,N_i}$ in Figure \ref{figa}.  In particular we have
many more blocking points, and they can bifurcate in a slightly  more complex way. Thus the domain of definition of the maps
$\zeta^{\pm}$ are more complicated.

\subsection{The phase space}
If a rhombus does not intersect any other rhombus at each corner there are three directions pointing to the interior of the table, 
while for all other points there are only two such directions  (see Figure \ref{fig3}).  Intersecting rhombi can have slightly different behavior, but we do not need to describe it since they will only occur in the proof in a very special way.

\begin{figure}[h]

\begin{minipage}[ht]{0.45\linewidth}
\centering
\begin{tikzpicture}[rotate=45, scale=1]

\draw[] (0,0) rectangle +(2,2);

\draw [](1,2) edge[->]  (1+1.732/4,2+1/4) ;
\draw [](1,2) edge[->]  (1-1.732/4,2+1/4) ;

\draw [](1,0) edge[->]  (1+1.732/4,-1/4) ;
\draw [](1,0) edge[->]  (1-1.732/4,-1/4) ;

\draw [](0,1) edge[->]  (-1.732/4,1+1/4) ;
\draw [](0,1) edge[->]  (-1.732/4,1-1/4) ;

\draw [](2,1) edge[->]  (2+1.732/4,1+1/4) ;
\draw [](2,1) edge[->]  (2+1.732/4,1-1/4) ;

\draw [](2,2) edge[->]  (2-1.732/4,2+1/4) ;
\draw [](2,2) edge[->]  (2+1.732/4,2+1/4) ;
\draw [](2,2) edge[->]  (2+1.732/4,2-1/4) ;

\draw [](0,2) edge[->]  (-1.732/4,2+1/4) ;
\draw [](0,2) edge[->]  (1.732/4,2+1/4) ;
\draw [](0,2) edge[->]  (-1.732/4,2-1/4) ;

\draw [](0,0) edge[->]  (-1.732/4,1/4) ;
\draw [](0,0) edge[->]  (1.732/4,-1/4) ;
\draw [](0,0) edge[->]  (-1.732/4,-1/4) ;

\draw [](2,0) edge[->]  (2-1.732/4,-1/4) ;
\draw [](2,0) edge[->]  (2+1.732/4,1/4) ;
\draw [](2,0) edge[->]  (2+1.732/4,-1/4) ;
\end{tikzpicture}
\caption{The phase space of one rhombus.}\label{fig3}
\end{minipage}\nolinebreak
\begin{minipage}[ht]{0.55\linewidth}
\centering
\begin{tikzpicture}[rotate=45,scale=0.5]

\draw[red,directed] (0,2.3) -- (0,4.3);
\draw[red,directed] (0,4.3) -- (2,4.3);
\draw [red](1,4.3) edge[->]  (1-1.732*0.5,4.3+0.5) ;
\draw [red](0,3.3) edge[->]  (-1.732*0.5,3.3+0.5) ;
\draw [red,dotted] (0,2.3) -- (2, 4.3) ;

\draw[blue,directed] (4.3,2) -- (4.3,0);
\draw[blue,directed] (4.3,0) -- (2.3,0);
\draw [blue](3.3,0) edge[->]  (3.3+1.732*0.5,-0.5) ;
\draw [blue](4.3,1) edge[->]  (4.3+1.732*0.5,1-0.5) ;
\draw [blue,dotted] (4.3,2) -- (2.3, 0) ;

\draw[brown,directed] (0,2) -- (0,0);
\draw[brown,directed] (0,0) -- (2,0);
\draw [brown](1,0) edge[->]  (1-1.732*0.5,-0.5) ;
\draw [brown](0,1) edge[->]  (-1.732*0.5,1-0.5) ;
\draw [brown,dotted] (0,2) -- (2,0) ;

\draw[directed] (4.3,2.3) -- (4.3,4.3);
\draw[directed] (4.3,4.3) -- (2.3,4.3);
\draw [](3.3,4.3) edge[->]  (3.3+1.732*0.5,4.3+0.5) ;
\draw [](4.3,3.3) edge[->]  (4.3+1.732*0.5,3.3+0.5) ;
\draw [dotted] (4.3,2.3) -- (2.3,4.3) ;

\end{tikzpicture}
\caption{The phase space is the disjoint union of four closed oriented ``intervals''.}\label{fig4}
\end{minipage}

\end{figure}

By taking the arc length coordinate $\s$ along the diagonal of a rhombus, 
we think of  the contribution of the rhombus with center 
$z_n$ to $X^{g,\theta}$ as the union of four closed 
intervals $I^n_\phi$ indexed by 
$\phi \in [\theta] := \{\theta, \frac32\pi - \theta, \pi + \theta, \frac12\pi-\theta \}$, each of these
intervals corresponds to the cartesian product of  the two intersecting sides  of the rhombus with a fixed inner pointing direction  $\phi$ (see Figure \ref{fig4}) (as before the word inner means pointing into the table, so  away from the rhombus).   Since $\theta$ is assumed not parallel to a side,  the set $[\theta]$ is in bijection with the set $\{1,2,3,4\}$ by choosing the  quadrant containing  $\phi \in [\theta]$.

Note that for any $n$ and $\phi$ the interval $I^n_\phi$ is just translated and rotated copy of the interval $I = [0,s]$.  Using this  we can drop the index $n$ from the notation.
This yields to the following identification.
\begin{equation}\label{phasespace}
X^{\theta} = \N \times I \times \{1,2,3,4\}.
\end{equation}

Let
$$\U_\varepsilon(g) := \{g'  \in \textit{Conf} : d_H(g',g)<\varepsilon\}.$$ 

\begin{proposition}\label{prop1}
There is a dense $G_{\delta}$ subset $G$ of $(\textit{Conf},d_H)$ such that for each $g \in G$ 
\begin{enumerate}
\item $g$ is an infinite configuration,
\item every pair of points $z_1,z_2 \in g$ satisfy $d(z_1,z_2) > s$.
\item there is an enumeration of  $g$  such that $d(z_n,0)$ is a strictly increasing function.
\end{enumerate}
\end{proposition}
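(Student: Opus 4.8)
The plan is to run a Baire category argument. By \cite{MSTr4} the space $(\textit{Conf},d_H)$ is a compact metric space, hence a Baire space, so it suffices to write $G=G_1\cap G_2\cap G_3$ where $G_1,G_2,G_3$ are the sets carving out conclusions (1),(2),(3) respectively, and to show that each is itself a countable intersection of open dense sets. A finite intersection of dense $G_\delta$ sets in a Baire space is again a dense $G_\delta$, which gives the proposition. Throughout, the crucial structural fact I would exploit is that every configuration is $s$-separated, so it meets each bounded region of the plane in only finitely many centers, and that these finitely many centers move continuously under small $d_H$-perturbations.

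For \textbf{(1)} I set $G_1:=\bigcap_{n\ge1}\{g:\#g\ge n\}$, which is exactly the set of infinite configurations. Each set $\{g:\#g\ge n\}$ is open: if $g$ has $n$ distinct finite centers, they are pairwise at distance $\ge s$, so any $g'$ with $d_H(g,g')$ small has a center near each of them, and these remain distinct. It is dense because, given $g$ and $\varepsilon>0$, one adjoins finitely many new rhombi whose centers are very far from the origin; on the sphere these centers lie arbitrarily close to the north pole $\infty\in\hat g$, so $d_H$ changes by less than $\varepsilon$, while far out it is trivial to keep the new centers mutually $\ge s$ apart and $\ge s$ from the bounded original ones.

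For \textbf{(2)} I define, for $m\in\N$, the set $G_2^{(m)}$ of configurations all of whose centers inside the closed ball $\{z:d(z,0)\le m\}$ are pairwise at distance strictly greater than $s$, and put $G_2:=\bigcap_m G_2^{(m)}$; since every pair of centers lies in some such ball, $G_2$ is precisely the set in (2). Openness uses that $s$-separation forces only finitely many centers in the ball, so the finite family of strict inequalities persists under small $d_H$-perturbations. Density is a transversality argument: the finitely many tangency conditions $d(z_i,z_j)=s$ inside the ball each cut out a codimension-one subset of the finite-dimensional space of positions of the relevant centers, so an arbitrarily small, suitably directed perturbation of those finitely many centers makes all these distances strict while preserving the inequality $\ge s$ against the outside centers. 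Property \textbf{(3)} is handled identically: let $G_3^{(m)}$ consist of configurations whose centers inside $\{z:d(z,0)\le m\}$ have pairwise distinct distances to the origin, and set $G_3:=\bigcap_m G_3^{(m)}$, the openness and density proofs being those of $G_2$ with the conditions $d(z_i,z_j)=s$ replaced by $d(z_i,0)=d(z_j,0)$. For $g\in G:=G_1\cap G_2\cap G_3$ all centers have distinct distances to $0$; since $g$ is infinite and, being $s$-separated, meets every ball in finitely many centers, these distances form a sequence tending to infinity, so enumerating the centers in increasing order of distance produces the strictly increasing enumeration required in (3).

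The step I expect to be the main obstacle is establishing \emph{openness} with respect to the spherical Hausdorff metric rather than a naive planar one: one must rule out that an arbitrarily small $d_H$-perturbation manufactures a new near-coincidence by pushing a center across the sphere $\{z:d(z,0)=m\}$, or by merging or splitting centers near infinity. I would resolve this by working with closed balls together with the observation that on compact subsets of the plane, that is away from the north pole, the inverse stereographic identification is bi-Lipschitz, so $d_H$-smallness translates into uniform planar control of the finitely many centers lying in any fixed bounded region; the $s$-separation then keeps their count locally constant and their positions continuous, which is exactly what the openness arguments for $\{\#g\ge n\}$, $G_2^{(m)}$ and $G_3^{(m)}$ require.
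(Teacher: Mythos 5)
Your decomposition is genuinely different from the paper's proof: the paper quotes \cite{MSTr4} for items (1) and (2), and obtains (3) from the limsup set $\bigcap_{j}\bigcup_{i\ge j}\U_{\delta_i}(g_i)$ built around a dense family of \emph{finite} configurations with strictly increasing distances to the origin; you instead construct explicit open dense sets for all three conditions. Your openness claims are correct, provided they are run the way you indicate in your closing paragraph: for a fixed $g$, $s$-separation gives finitely many centers in $\overline{B}_{m+1}$, the centers lying strictly outside $\overline{B}_m$ have a positive planar gap to $\overline{B}_m$, and the bi-Lipschitz identification away from the north pole converts $d_H$-smallness into planar control; hence a small perturbation neither lets a new center enter the \emph{closed} ball $\overline{B}_m$ nor destroys a finite family of strict inequalities. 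So far this is a legitimate, self-contained alternative to the paper's route.

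The genuine gap is in the density of $G_2^{(m)}$ and $G_3^{(m)}$. The transversality step --- ``an arbitrarily small, suitably directed perturbation of those finitely many centers makes all these distances strict while preserving the inequality $\ge s$ against the outside centers'' --- is false in general, and it fails exactly on the configurations you actually need to approximate, namely those with tangencies. Take $g^\ast$ to be the maximal packing in which the rhombi tile the plane (centers on the lattice spanned by $(s/2,s/2)$ and $(s/2,-s/2)$, every adjacent pair at $\mathcal L_1$-distance exactly $s$). If the centers outside $\overline{B}_m$ are held fixed, an area count shows that the rhombi of the inside centers must tile the region left free by the outside rhombi, and a tiling of that region by rhombi of this fixed size and orientation is unique; hence not a single inside center can be moved, and neither the tangencies nor the equidistant pairs relevant to (3) can be removed by a perturbation supported inside the ball. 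The sets $G_2^{(m)}$, $G_3^{(m)}$ are nevertheless dense, but proving it requires a perturbation that moves \emph{all} centers: for instance the dilation $z\mapsto(1+\varepsilon)z$, which changes $d_H$ only by $O(\varepsilon)$ (distant centers move far in the plane but hardly at all on the sphere) and turns every distance $\ge s$ into a distance $\ge(1+\varepsilon)s>s$, after which your local perturbation for (3) becomes available because every constraint now has slack; alternatively, first delete all centers outside a huge ball $B_R$ (a $d_H$-change of order $1/R$) and then perturb the resulting finite configuration freely, which is in effect what the paper does by approximating with finite configurations throughout. Without such a global step your Baire argument is incomplete.
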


Part (3) of the Proposition allows us to assert that the identification of the phase space with the set $X^{\theta}$ varies continuously
with the parameter $g \in G$.  The proof uses parameters which are not in $G$, however we will only use the identification
with a compact part of $X^{\theta}$ which will vary continuously with the parameter.

\begin{proof}
Points (1) and (2) were proven in \cite{MSTr4}. We start with a dense set of configurations $\{g_i\}$, such that each configuration is finite, 
and moreover $g_i = \{z^{g_i}_1, \dots , z^{g_i}_i\}$ satisfies condition (3).
Let $\delta_i > 0$ be so small
that $d(z^{g'}_n,0)$ is strictly increasing for $n=1,\dots, i$
for  any $g' \in \U_{\delta_i} (g_i)$. Point (3) follows immediately for the dense $G_\delta$ set
$\cap_{j=1}^{\infty} \cup_{i \ge j} \U_{\delta_i}(g_i)$.
\end{proof}

\begin{figure}[t]
\begin{minipage}[ht]{0.5\linewidth}
\centering
\begin{tikzpicture}[rotate=45,scale=0.75]

\draw[thin] (0.1,0.2) rectangle +(0.5,0.5);
\draw[thin] (0.3,-0.7) rectangle +(0.5,0.5);

\draw[thin] (-0.8,-0.6) rectangle +(0.5,0.5);
\draw[thin] (-1.5,0.4) rectangle +(0.5,0.5);
\draw[thin] (1,0.4) rectangle +(0.5,0.5);

\draw[thin,lightgray](-2,-2)--(-2,2)--(2,2)--(2,-2)--(-2,-2);
 
\foreach \i in {-2.25,-1.75,...,1.75}
\draw[](\i,1.75) rectangle +(0.5,0.5);
 
 \foreach \i in {-2.25,-1.75,...,1.75}
 \draw[](\i,-2.25) rectangle +(0.5,0.5);
  
 \foreach \i in {-1.75,-1.25,...,1.25}
 \draw[](1.75,\i) rectangle +(0.5,0.5);
 
 \foreach \i in {-1.75,-1.25,...,1.25}
 \draw[](-2.25,\i) rectangle +(0.5,0.5);
\end{tikzpicture}
\end{minipage}\nolinebreak
\begin{minipage}[ht]{0.5\linewidth}
\begin{tikzpicture}[rotate=45,scale=0.75]

 \draw[thin] (0.1,0.2) rectangle +(0.5,0.5);
 \draw[thin] (0.3,-0.7) rectangle +(0.5,0.5);

 \draw[thin] (-0.8,-0.6) rectangle +(0.5,0.5);
\draw[thin] (-1.5,0.4) rectangle +(0.5,0.5);
\draw[thin] (1.1,0.4) rectangle +(0.5,0.5);
 
 \foreach \i in {-2.4,-1.85,...,2.3}
 \draw[](\i,1.95) rectangle +(0.5,0.5);
 
 \foreach \i in {-2.4,-1.85,...,2.3}
  \draw[](\i,-2.45) rectangle +(0.5,0.5);
  
 \foreach \i in {-1.9,-1.35,...,1.55}
 \draw[](2,\i) rectangle +(0.5,0.5);
 
    \foreach \i in {-1.9,-1.35,...,1.55}
 \draw[](-2.4,\i) rectangle +(0.5,0.5);

\end{tikzpicture}
\end{minipage}\nolinebreak
\caption{An 8-ringed configuration and a configuration close to it.}\label{fig}
\end{figure}
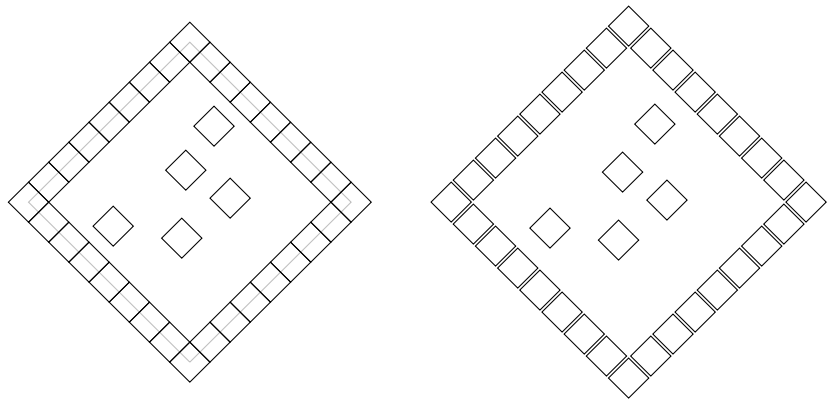

\subsection{Ringed configurations}
A configuration $h$ is \textit{$n$-ringed}  if the following three conditions hold:\\
\noindent
1) the boundary  $R(n):=\{(x,y) \in \R^2: |x| + |y| = ns\}$ of the corresponding rhombus is completely covered
by trees as in Figure \ref{fig} left (i.e., the obstacles covering the boundary intersect with each other on a whole side or do not intersect at all, and the boundary $R(n)$ intersection with each tree consists of two points which are in the middle of two sides of the tree),\\
\noindent
2)  the obstacles inside the ring, which we will call  \textit{interior obstacles},
can be enumerated $\{z_1,\dots,z_m\}$ in a way such that their distance to the origin is a strictly increasing function,\\
\noindent
3) no interior obstacle touches any other obstacle. 

We start with a dense set  of parameters  $\{f_i\}$ such that each $f_i$ is an $N_i$-ringed configuration, $N_i$ is increasing 
with $i$, and $f_i$ has at least $i$ interior obstacles. We define $X^{\theta,N_i} := \{1,\dots,m\} \times I \times \{1,2,3,4\}$ to be the part of the phase space corresponding to the inner obstacles; it plays the role of the set $X^{N_i}$ in the staircase case.

\subsection{The set corresponding to $\Sigma^{\w^i,\theta,N_i}$ and the maps $\zeta^{\pm}$} 
Consider the corners of the rhombi  which are inside the convex hull of $R(n)$; this includes all the corners of inner rhombi
and some corners of rhombi on the ring.  Unlike for the staircase,  the corners of inner obstacles are already on our cross-section, 
while for those points which are not in the set $X^{\theta,N_i}$, we flow them backward 
until they hit the cross-section $X^{\theta,N_i}$ for the first time, these are the \textit{blocking points}.
This finite collection of points plays the role of the set $\Sigma^{\w^i,\theta,N_i}$  for staircases.
To define $\zeta^+$ we proceed like for staircases, the 
   backward $(T^{f_i,\theta})^{\ell_i}$-orbits  of these points partition $X^{\theta,N_i}$ into a finite number of intervals $ \{I_j^{\w^i,\theta}\}$, on each such
interval the forward  map $(T^{f_i,\theta})^{\ell_i}$ is continuous. The blocking points bifurcate into two or three points, depending on if
two or three rhombi touch at the blocking point.  

\section{Appendix}
\subsection{Conservativity of $(T^{\w,\t},X,\mu)$} \label{sec:cons}

Let $T=T^{\w,\t}$. An \textit{(almost invariant) box sequence} is an increasing sequence 
$(Y_n)$ of open subsets of $X$ such that:
\begin{enumerate}
	\item $X=\cup Y_n$
	\item each $Y_n$ is of finite measure
	\item $\lim_{n\to\infty}\mu\left(T(Y_n)\setminus Y_n\right)=0$.
\end{enumerate}

\begin{lemma}[Boxes lemma]
If there is an almost invariant box sequence then $(T,X,\mu)$ is conservative.
\end{lemma}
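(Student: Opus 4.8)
The plan is to prove conservativity by ruling out wandering sets of positive measure, which (for the invertible, measure-preserving first-return map $T$) is equivalent to conservativity. First I would reduce to a convenient wandering set. If $W$ is wandering with $\mu(W)>0$, then since $Y_n\uparrow X$ we have $\mu(W\cap Y_n)\uparrow\mu(W)>0$, so $\mu(W\cap Y_m)>0$ for some $m$; replacing $W$ by $W\cap Y_m$ (a subset of a wandering set is wandering) I may assume $W\subset Y_m$ and $0<\mu(W)\le\mu(Y_m)<\infty$. Write $Z_k:=T^kW$, so the sets $Z_k$ are pairwise disjoint with $\mu(Z_k)=\mu(W)$, and set $\epsilon_n:=\mu\left(T(Y_n)\setminus Y_n\right)$, which tends to $0$ by hypothesis (3).

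The heart of the argument is to track $\mu(Z_k\cap Y_n)$ for a fixed large $n\ge m$. Since $T$ is injective and measure-preserving, $\mu(Z_k\cap Y_n)=\mu(TZ_k\cap TY_n)$, while $\mu(Z_{k+1}\cap Y_n)=\mu(TZ_k\cap Y_n)$; subtracting gives the flux identity
$$\mu(Z_{k+1}\cap Y_n)-\mu(Z_k\cap Y_n)=\mu\big(TZ_k\cap(Y_n\setminus TY_n)\big)-\mu\big(TZ_k\cap(TY_n\setminus Y_n)\big)=:e_k-\ell_k,$$
with $e_k,\ell_k\ge0$. The decisive point I would stress is that the total outgoing flux is controlled globally rather than step by step: since $Z_{k+1}=T^{k+1}W$ and the sets $\{T^jW\}_{j\ge1}$ are pairwise disjoint,
$$\sum_{k\ge0}\ell_k=\sum_{j\ge1}\mu\big(Z_j\cap(TY_n\setminus Y_n)\big)=\mu\Big(\bigcup_{j\ge1}\big(Z_j\cap(TY_n\setminus Y_n)\big)\Big)\le\mu(TY_n\setminus Y_n)=\epsilon_n.$$

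Summing the flux identity and using $e_i\ge0$ together with $\sum_i\ell_i\le\epsilon_n$ yields, for every $k\ge0$,
$$\mu(Z_k\cap Y_n)\ge\mu(W\cap Y_n)-\sum_{i=0}^{k-1}\ell_i\ge\mu(W)-\epsilon_n,$$
a lower bound uniform in $k$ (here $\mu(W\cap Y_n)=\mu(W)$ because $W\subset Y_m\subset Y_n$). Now I choose $n$ so large that $\epsilon_n<\mu(W)$; then $\mu(Z_k\cap Y_n)\ge\mu(W)-\epsilon_n>0$ for all $k$, whence $\sum_{k\ge0}\mu(Z_k\cap Y_n)=\infty$. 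But the $Z_k$ are pairwise disjoint subsets of $Y_n$, so $\sum_{k\ge0}\mu(Z_k\cap Y_n)\le\mu(Y_n)<\infty$, a contradiction. Hence no wandering set of positive measure exists and $(T,X,\mu)$ is conservative.

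The step I expect to be the crux is the global bound on $\sum_k\ell_k$: a naive per-step estimate only gives $\ell_k\le\epsilon_n$, which is useless when summed over $k$, and the whole argument hinges on recognizing that the outgoing flux at distinct times lands in \emph{disjoint} pieces of the orbit and therefore accumulates inside the single small set $TY_n\setminus Y_n$. Everything else is bookkeeping exploiting that $T$ is injective and measure-preserving.
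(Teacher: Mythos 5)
Your proof is correct, but it takes a genuinely different route from the paper's. You argue measure-theoretically and by contradiction: a wandering set $W$ of positive measure is confined to some $Y_m$, and a telescoping flux identity, combined with the observation that disjointness of the images $T^jW$ bounds the total outgoing flux over \emph{all} times by $\mu(T(Y_n)\setminus Y_n)$, gives the uniform lower bound $\mu(T^kW\cap Y_n)\ge\mu(W)-\mu(T(Y_n)\setminus Y_n)>0$, which contradicts the disjointness of the sets $T^kW\cap Y_n$ inside the finite-measure set $Y_n$. The paper instead fixes a non-empty \emph{open} set $U$, sets $V=U\cap Y_{n_0}$, chooses $n$ with $\mu(T(Y_n)\setminus Y_n)<\mu(V)$, and considers the sets $A^n_i\subset V$ of points escaping $Y_n$ at exactly time $i$; it then splits into two cases: if the images $T^i(A^n_i)$ are pairwise disjoint, they all sit inside $T(Y_n)\setminus Y_n$, so a positive-measure set $F\subset V$ never leaves $Y_n$ and the Poincar\'e recurrence theorem applied to the finite-measure forward-invariant set $\cup_{i\ge 0}T^i(F)$ yields a return; if they are not disjoint, the overlap forces $T^{i-j}(A^n_i)\cap A^n_j\neq\emptyset$, hence again a return of $V$ to itself. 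Both arguments hinge on the same mechanism --- escape images at distinct times, when disjoint, must all fit inside the single small set $T(Y_n)\setminus Y_n$ --- but in your proof the disjointness comes for free from the wandering hypothesis, no case distinction is needed, and Poincar\'e recurrence is never invoked. Your route also buys a cleaner conclusion: it establishes conservativity in the standard measure-theoretic sense (no measurable wandering set of positive measure), which is the form actually used later when the Hopf ergodic theorem is applied, whereas the paper's proof as written shows that non-empty open sets are non-wandering, a topological statement that requires a further (routine but unstated) step to upgrade to the measurable one.
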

\begin{proof}
	Let $(Y_n)$ be a box sequence for $T$.
	Let $U$ be a non-empty open set. We want to show that $U$ is \textit{non-wandering}, i.e., that there is an $i$ such that $T^i(U)\cap U\neq \emptyset$.
	
	Let $V_n=U\cap Y_n$. By the definition of a box sequence, we have that $U=\cup V_n$. Thus, for some $n_0$, $V_{n_0}$ is non-empty. Let $V=V_{n_0}$ and $n\ge n_0$ large enough so $\mu(T(Y_n)\setminus Y_n)<\mu(V)$.  Then we have $V\subset Y_n$. 
	
	For each $i\ge 1$, let $A^n_i$ be the set of points in $V$ who escape from $Y_n$ at time $i$, namely 
	$$A^n_i=\{x\in V : T^i(x)\not\in Y_n\text{ and }\forall 0\le k< i, T^k(x)\in Y_n\}.$$
	
Let us suppose the sets $T^i(A^n_i)$ are pairwise disjoint. Note that $\cup_iT^i(A^n_i)\subset T(Y_n)\setminus Y_n$, then we have $$\mu\left(\bigcup_iA^n_i\right)\le\Sigma_i\mu\left(A^n_i\right)=\Sigma_i\mu\left(T^i(A^n_i)\right)$$
$$=\mu\left(\bigcup_iT^i(A^n_i)\right)\le\mu(T(Y_n)\setminus Y_n))<\mu(V).$$
	Thus the set $F= V\setminus \cup_iA^n_i$, which is the set of points in $V$ whose (positive) iterates always stay 
in $Y_n$, is of positive measure. Given that $Y_n$ is of finite measure, the Poincar\'e recurrence Theorem applied to $\cup_{i \ge 0} T^i(F)$ implies that  this set $F$ is 
non-wandering. Thus $V$ and $U$ are also non-wandering.

On the other hand if the sets $T^i(A^n_i)$ are not pairwise disjoint, then there exists $i\neq j$ such that $T^i(A^n_i)\cap T^j(A^n_j)\neq \emptyset$. Note that $A^n_i$, $A^n_j$, $T^i(A^n_i)$ and $T^j(A^n_j)$ have all non-empty interiors. Thus it follows that $T^{i-j}(A^n_i)\cap A^n_j\neq \emptyset$. But $A^n_i$ and $A^n_j$ are both subsets of $V$. Thus $V$ (and also $U$) is non-wandering.
\end{proof}

\begin{proof} (Proposition \ref{prop:cons})
Consider a strictly increasing sequence $i_n$ and a strictly decreasing sequence $j_n$ such that 
$w_{i_k} \to 0$ and $w_{j_k} \to 0$.	Then the sequence of sets $X_n := \{j_k +1 , \dots, i_k\} \times [0,2)$ is a sequence
of boxes for all $\theta$.
\end{proof}

\subsection{Cantor representation}\label{secCantor}
Consider the  mapping $T: X \times \mathbb{S}^1 \to X \times \mathbb{S}^1$, induced by the first return map to the 
cross section $X$ of the translation flow in 
the direction $\theta$ on a compacted connected translation surface. 
For each $\theta$, the map $T|_{X \times \{\theta\}}$ is an interval exchange transformation (up to the behavior on the endpoints of the intervals).  
The following construction can be found in \cite{GaKrTr}.
Label the sets of continuity of $T$ by a finite enumerated alphabet $\mathcal{A}$, these are connected regions, 
with piecewise smooth boundary
(one can even choose coordinates so that they are polygonal regions \cite{Ka}).
Let $X^\infty$ be the set of $(x,\theta)$ whose forward orbit does not hit a singularity 
(nor an endpoint of the cross section if the cross section does not start and end at a singular point), 
and for $(x,\theta) \in X^{(\infty)}$  
let $i(x,\theta) \in \mathcal{A}^{{\mathbb{N}}}$  be the coding map defined by 
$i(x,\theta)_j = k \in \mathcal{A}$ if and only if $T^j(x,\theta)$ is in the $k$th element of $\mathcal{A}$. 
Let $K := \overline{ i(X^{(\infty)})} \subset  \mathcal{A}^{{\mathbb{N}}}$.
Let $\hat T$ denote the shift map on $K$, then by defnition $\hat T \circ i = i \circ T$ on $X^{(\infty)}$.  
Each point $(x,\theta) \in X \times \mathbb{S}^1 \setminus X^\infty$
has exactly two images in $K$. The main result of \cite{GaKrTr} it that the inverse $i^{-1}: K \to X \times \SS1$   satisfies
that the set $i^{-1} (s)$ is in fact a single point except when $s$ is a periodic point. If $s$ is a periodic point then $i^{-1}(s)$
is a periodic cylinder in $X$.

Finally let $K^\theta := i^{-1} (X \times \{\theta\}$), this is the Cantor representation of the corresponding IET.

\subsection{Uniform convergence of Hopf averages}\label{secUniform}
Both in Sections \ref{secStair} and \ref{secWT},  the first return maps  we consider, $T^{\w^i,\theta}|_{X^{N_i}}$ and 
$T^{f_i}|_{X^{\theta,N_i}}$, are IETs, thus they are not continuous. Therefore we need to discuss in detail the notion of 
unique ergodicity and its relation to uniform convergence of Hopf averages, as well as how the convergence varies as we 
vary the angle on the translation surface. To do this we will use the Cantor representation of the previous subsection.

\begin{lemma}\label{lemlem}
An IET  $(T,X)$ without saddle connection is uniquely ergodic if and only if it's Cantor representation $(\hat T, K)$ is uniquely ergodic
\end{lemma}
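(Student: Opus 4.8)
The plan is to treat the coding map $i$ of \S\ref{secCantor} as a measurable conjugacy between $(T,X)$ and $(\hat T, K)$ and to show that it induces an affine bijection between the two sets of invariant Borel probability measures; unique ergodicity then transfers, since a set is a singleton precisely when its image under an affine bijection is.

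First I would record that an IET without saddle connection is minimal: it is classical (Keane) that the absence of connections between the singularities forces every orbit to be dense, so in particular $T$ has no periodic orbit. By the main result of \cite{GaKrTr} quoted above, the fibres $i^{-1}(s)$ are singletons except over periodic points $s$ of $\hat T$, and such periodic points correspond exactly to periodic cylinders of $T$; since $T$ has no periodic orbit, $\hat T$ has no periodic point and $i^{-1}\colon K\to X$ is single valued everywhere. Dually, $i$ is injective on the full set $X^{(\infty)}$ of points whose forward orbit avoids the singularities, and $X\setminus X^{(\infty)}$ is the countable union of the backward orbits of the finitely many singularities. Thus $i$ and $i^{-1}$ are mutually inverse bijections off a countable set, and they intertwine $T$ and $\hat T$ there.

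Next I would use aperiodicity to dispose of these countable exceptional sets. Since neither $T$ nor $\hat T$ has a periodic point, every invariant Borel probability measure of either system is non-atomic: an atom at $x$ would, by invariance, produce an atom of equal mass at every point of the (infinite, because aperiodic) orbit of $x$, contradicting finiteness. Consequently any $T$-invariant probability $\nu$ gives full mass to $X^{(\infty)}$, and any $\hat T$-invariant probability $\hat\nu$ gives zero mass to the countable set of points with two codings, so the push-forwards $i_*\nu$ and $(i^{-1})_*\hat\nu$ are well defined, are invariant for $\hat T$ and for $T$ respectively, and remain probability measures.

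Finally I would check that $\nu\mapsto i_*\nu$ and $\hat\nu\mapsto (i^{-1})_*\hat\nu$ are mutually inverse and affine, using $i^{-1}\circ i=\mathrm{id}$ off the exceptional set together with the non-atomicity just established. This yields an affine bijection between the invariant probability measures of $T$ and those of $\hat T$, so one set is a singleton iff the other is, which is the assertion. The step I expect to require the most care is precisely the bookkeeping of the two exceptional sets (backward orbits of singularities on the $X$ side, double codings and would-be periodic points on the $K$ side) and the verification that invariant measures never charge them; this is exactly where the hypothesis of no saddle connection, through minimality and hence aperiodicity, enters.
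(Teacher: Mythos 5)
Your proposal is correct and follows essentially the same route as the paper: both transfer invariant measures through the coding map of the Cantor representation, invoking the result of \cite{GaKrTr} that $i^{-1}$ is single-valued except over periodic codings, and both dispose of the countable exceptional sets (backward orbits of the singularities on the $X$ side, double codings on the $K$ side) by noting that the absence of saddle connections excludes atomic invariant measures. The only difference is organizational: you package the argument as an affine bijection between the two sets of invariant probability measures, proving both directions symmetrically, whereas the paper treats one direction as clear and proves the other by a contradiction argument on pushed-forward measures; the underlying mechanism is identical.
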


\begin{proof} The only if direction is clear.

Consider any pair of  $\hat T$ invariant measures $\hat \nu_1, \hat \nu_2$.
Using the mapping $j$ we pushforward these two measures to a pair of measure  $\nu_j$ on $Y$.
Since $T$ is uniquely ergodic the pushwords  satisfy $ \nu_1 =   \nu_2 $.
Thus for any measurable set $C \subset I$, setting $\hat C := j^{-1}(C)$ we have
\begin{equation}\hat \nu_1(\hat C) = \hat \nu_2(\hat C).\label{haha}\end{equation} 

If $\hat \nu_1 \ne \hat \nu_2$ then there is a measurable set $B \subset K$ such that
$\hat \nu_1(B) \ne \hat \nu_2(B)$.
Consider the sets $\hat{B} := j^{-1}( j(B)) \supset B$ and $B_1 := \hat B \setminus B$. By construction the set $B_1$ is 
at most countable since it is contained in the set consisting of the $\hat T$ orbits  the finite set of $j^{-1}$ preimages of the singular points of $(T,X)$.
By Equation \eqref{haha} we have 
$\nu_1(\hat B) = \nu_2(\hat B)$, thus at least one of the two measures must give positive measure to the countable
set $B_1$, i.e., it must be atomic. 
Since we have supposed that there are no saddle connections,
there can not be any atomic measures, thus $\hat \nu_1 = \hat \nu_2$.
\end{proof}

We need the following result which we did not find in the literature, it is well known if $g \equiv 1$, i.e., for Birkhoff averages.
\begin{proposition}\label{propprop}
If $\hat T : K \to K$ is uniquely ergodic (in the finite measure sense) then for every $f,g \in C(K)$ (with $\int_K g \, d\mu \ne 0$)
$$ \frac{ \sum_{k=0}^{\ell} f \big (\hat T^k( z)  \big)}{\sum_{k=0}^{\ell} g \big (\hat T^k( z)  \big)}$$
converges uniformly to a constant.
\end{proposition}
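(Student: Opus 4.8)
The plan is to reduce the statement to the classical theorem of Oxtoby characterizing unique ergodicity through uniform convergence of Birkhoff averages, and then to treat the ratio by a direct quotient estimate.

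First I would invoke Oxtoby's theorem: since $(\hat T, K)$ is uniquely ergodic with unique invariant probability measure $\mu$, and since $K$ is compact with $\hat T$ continuous, for every $\varphi \in C(K)$ the Birkhoff averages
$$\frac{1}{\ell+1}\sum_{k=0}^{\ell} \varphi\big(\hat T^k(z)\big)$$
converge to $\int_K \varphi\, d\mu$ \emph{uniformly} in $z \in K$ as $\ell \to \infty$. I apply this separately to $\varphi = f$ and to $\varphi = g$, writing $A_\ell(z)$ and $B_\ell(z)$ for the two averages, with uniform limits $\int_K f\, d\mu$ and $\int_K g\, d\mu =: c \neq 0$.

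Next I rewrite the Hopf average as a quotient of Birkhoff averages, the factors $\frac{1}{\ell+1}$ cancelling:
$$\frac{\sum_{k=0}^{\ell} f(\hat T^k z)}{\sum_{k=0}^{\ell} g(\hat T^k z)} = \frac{A_\ell(z)}{B_\ell(z)}.$$
Because $B_\ell \to c \neq 0$ uniformly, there exist $\ell_0$ and $\delta > 0$ with $|B_\ell(z)| > \delta$ for all $\ell \ge \ell_0$ and all $z$; in particular the quotient is well defined for large $\ell$ and its denominator is bounded away from $0$ uniformly in $z$. Then the elementary estimate
$$\Big| \frac{A_\ell}{B_\ell} - \frac{\int_K f\, d\mu}{c}\Big| \le \frac{\big|A_\ell - \int_K f\, d\mu\big|}{|B_\ell|} + \frac{\big|\int_K f\, d\mu\big|}{|B_\ell|\,|c|}\,\big|B_\ell - c\big|,$$
combined with $|B_\ell| > \delta$ and the two uniform convergences, shows that the left side tends to $0$ uniformly in $z$. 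Hence the Hopf average converges uniformly to the constant $\frac{\int_K f\, d\mu}{\int_K g\, d\mu}$.

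The only genuinely delicate point is the uniform lower bound on the denominator; everything else is the standard Oxtoby argument followed by the quotient estimate. I therefore expect the main (though mild) obstacle to be ensuring that the control of the denominator is uniform in $z$ rather than merely pointwise — which, as noted, is immediate once one has uniform convergence of $B_\ell$ to the nonzero constant $c$.
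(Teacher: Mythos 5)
Your proof is correct, but it takes a different route from the paper's. You invoke Oxtoby's theorem as a black box --- uniform convergence of Birkhoff averages for a continuous map of a compact metric space --- apply it separately to $f$ and to $g$, and then deduce the statement by pure algebra (the quotient estimate), with the uniform lower bound $|B_\ell| > \delta$ coming for free from uniform convergence of $B_\ell$ to $c \neq 0$. The paper instead re-runs the \emph{proof} of Oxtoby's theorem at the level of the Hopf ratios: it assumes uniform convergence fails, extracts times $\ell$ and points $z_\ell$ where the ratio stays $\varepsilon$-far from $\int f \, d\mu / \int g \, d\mu$, rewrites the ratio as $\int f \, d\mu_\ell / \int g \, d\mu_\ell$ for the empirical measures $\mu_\ell = \frac1\ell \sum_{i=0}^{\ell-1}\delta_{\hat T^i z_\ell}$, and passes to a weak-$*$ limit $\mu_\infty$, which is an invariant probability measure distinct from $\mu$ --- contradicting unique ergodicity. (The paper does quietly use uniform convergence of Birkhoff sums for $g$ to guarantee the denominators are eventually nonzero, so it is not entirely independent of the Oxtoby statement either.) Your argument is more modular and arguably cleaner: it exhibits the Hopf-ratio statement as a formal corollary of the Birkhoff statement, and every step is an elementary $\varepsilon$-estimate. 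The paper's compactness argument is more self-contained (it never needs the full strength of Oxtoby's conclusion for $f$, only for $g$) and displays the technique --- empirical measures plus weak-$*$ compactness --- that is the engine behind both results. Either proof is acceptable; yours has no gap.
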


The proof is essentially identical to the proof in the Birkhoff case.
\begin{proof}
From the Hopf ergodic theorem the constant must be equal 
to $\int_K f  \, d\mu / \int_K g \, d\mu$ where $\mu$ is 
the unique invariant probability measure. 
 From the uniform converges of Birkhoff sums
we can find an $L_0>0$ such that for all $\ell \ge L_0$
and all $z \in K$ we have 
${\sum_{k=0}^{\ell} g \big (\hat T^k( z)  \big)}> 0$ and thus the Hopf sums are well defined.
Suppose that the convergence is not uniform for some $f,g$.
Then there exists $\e > 0$ such that for all $L \ge L_0$
there exists $\ell > L$ and there exists $z_\ell \in K$ with
$$\left |  \frac{ \sum_{k=0}^{\ell} f \big (\hat T^k( z_\ell)  \big)}{\sum_{k=0}^{\ell} g \big (\hat T^k( z_\ell)  \big)} 
-\frac{\int_K f \, d\mu}{ \int_K g \, d\mu} \right | \ge \e.
$$
Set $\mu_\ell := \frac{1}{\ell} \sum_{i=0}^{\ell  -1} \delta_{\hat T^i z_\ell}$, then
$$ \frac{ \sum_{k=0}^{\ell} f \big (\hat T^k( z_\ell)  \big)}{\sum_{k=0}^{\ell} g \big (\hat T^k( z_\ell)  \big)} 
= \frac{\int_K f \, d\mu_\ell}{ \int_K g \, d\mu_\ell}$$
and thus
$$\left | \frac{\int_K f \, d\mu_\ell}{ \int_K g \, d\mu_\ell}
-\frac{\int_K f \, d\mu}{ \int_K g \, d\mu} \right | \ge \e.$$
Choose a convergent subsequence $\mu_{\ell_k} \to \mu_\infty$, then $\mu_\infty$ is an invariant probability measure and
$$\left | \frac{\int_K f \, d\mu_\infty}{ \int_K g \, d\mu_\infty}
-\frac{\int_K f \, d\mu}{ \int_K g \, d\mu} \right | \ge \e.$$
Thus $\mu_\infty \ne \mu$.
\end{proof}

Remark: 
Given a continuous function $h: X\times \SS1 \to \R$ we define $\hat h : K \to \R$ by
setting $\hat h(z) := h(j(z))$. The function $\hat h$ is continuous since $h$ and $j$ are continuous, we will call
it the \textit{lift} of $h$.

\begin{corollary}\label{corcor}
Suppose  $T : X \to X$ is a  uniquely ergodic IET.  Then for every $f,g \in C(X)$ (with $\int_X g \, d\mu \ne 0$)
$$ \frac{ \sum_{k=0}^{\ell} f \big ( T^k( z)  \big)}{\sum_{k=0}^{\ell} g \big (T^k( z)  \big)}$$
converges uniformly to a constant.
\end{corollary}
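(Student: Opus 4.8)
The plan is to reduce Corollary~\ref{corcor} to Proposition~\ref{propprop} by transporting everything to the Cantor representation $(\hat T, K)$ of the IET $(T,X)$ built in Section~\ref{secCantor}. First I would record that we may assume $T$ has no saddle connection: this is exactly the situation in which the Corollary is applied in the body of the paper, where the directions in the sets $A_i$ are chosen to avoid the (at most countable) set of saddle connection directions. Under this assumption Lemma~\ref{lemlem} applies, and since $T$ is uniquely ergodic its Cantor representation $(\hat T,K)$ is uniquely ergodic in the finite measure sense. Write $\hat\mu$ for the unique $\hat T$-invariant probability measure on $K$; pushing it forward by $j=i^{-1}$ recovers a positive multiple of $\mu$.

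Second, I would lift the observables. Given $f,g\in C(X)$, form their continuous lifts $\hat f=f\circ j$ and $\hat g=g\circ j$ in $C(K)$ as in the Remark following Proposition~\ref{propprop}. The coding map $i$ conjugates $T$ to $\hat T$ on the residual, full-measure set $X^{(\infty)}$ of points whose forward orbit avoids the singularities, so that $\hat f\big(\hat T^k(i(x))\big)=f\big(T^k(x)\big)$ and $\hat g\big(\hat T^k(i(x))\big)=g\big(T^k(x)\big)$ for every $x\in X^{(\infty)}$ and every $k\ge 0$. Hence, termwise, the Hopf ratio of $f$ over $g$ along the $T$-orbit of $x$ coincides with the Hopf ratio of $\hat f$ over $\hat g$ along the $\hat T$-orbit of $i(x)$.

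Third, I would invoke Proposition~\ref{propprop}. Since $\hat T$ is uniquely ergodic and $\int_K \hat g\, d\hat\mu\ne 0$ (guaranteed by $\int_X g\, d\mu\ne 0$ together with $j_*\hat\mu\propto\mu$), the quotient $\sum_{k=0}^{\ell}\hat f(\hat T^k z) / \sum_{k=0}^{\ell}\hat g(\hat T^k z)$ converges, uniformly over all $z\in K$, to the constant $\int_K \hat f\, d\hat\mu/\int_K \hat g\, d\hat\mu$, which equals $\int_X f\, d\mu/\int_X g\, d\mu$ because the proportionality constant between $j_*\hat\mu$ and $\mu$ cancels in the ratio. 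Restricting $z$ to the dense subset $i(X^{(\infty)})\subset K=\overline{i(X^{(\infty)})}$ and using the termwise identity above, the same uniform estimate holds for the Hopf ratio of $f$ over $g$ at every $x\in X^{(\infty)}$; the countably many orbits that meet a singularity are precisely the points excluded from the statement. This gives the asserted uniform convergence.

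The genuinely substantive inputs are Lemma~\ref{lemlem} and Proposition~\ref{propprop}, and the only point needing care is the passage through the exceptional singular orbits: this is handled by the no-saddle-connection hypothesis, which rules out atomic $\hat T$-invariant measures and keeps the fibres of $i$ single points off a countable set, so that uniform convergence on $K$ descends verbatim to $X^{(\infty)}$. I expect this bookkeeping, rather than any analytic estimate, to be the main (and only minor) obstacle.
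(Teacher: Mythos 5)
Your proof follows exactly the paper's route: pass to the Cantor representation, use Lemma \ref{lemlem} to transfer unique ergodicity, lift $f,g$, apply Proposition \ref{propprop}, and project back. The only difference is that you spell out the bookkeeping (the no-saddle-connection hypothesis needed for Lemma \ref{lemlem}, and the exclusion of singular orbits) which the paper leaves implicit but which is indeed consistent with how the corollary is invoked in the main proof, where the sets $A_i$ avoid saddle-connection directions.
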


\begin{proof} We consider the Cantor representation $(\hat T, K)$ of $(T,X)$.  By Lemma \ref{lemlem} it is uniquely ergodic.
We lift $f,g$ to $\hat f,\hat g$ and 
 apply  Proposition  \ref{propprop} to $\hat f$ and $\hat g$.
Projecting back to $X$ yields the uniform convergence of the Hopf averages of $f$ and $g$. 
\end{proof}

\end{document}